\documentclass[english]{ourlematema}
\usepackage{amsmath, amsthm, amssymb, hyperref, color}
\usepackage{MnSymbol} 
\usepackage{graphicx}
\usepackage{caption}
\usepackage{verbatim}
\usepackage{chemfig, chemnum}
\usepackage{tikz}
\usepackage[linesnumbered,lined,commentsnumbered]{algorithm2e}
\usepackage{caption}
\usepackage{subcaption}
\synctex=1

\usepackage{makecell}
\usepackage{array}
\usepackage{booktabs}

\newcommand{\RR}{\mathbb{R}}

\renewcommand{\S}{\mathbb{S}}

\newcommand {\cS} {\mathcal {S}}
\newcommand {\cL} {\mathcal {L}}

\newcommand{\<}{\langle}
\renewcommand{\>}{\rangle}

 \title{Likelihood geometry of correlation models}
 
  \author{Carlos Am\'endola}
  \address{%
  Technische Universit\"at M\"unchen, Universit\"at Ulm \\
\email{carlos.amendola@tum.de}
}

\author{Piotr Zwiernik}
\address{%
Universitat Pompeu Fabra, Barcelona \\
\email{piotr.zwiernik@upf.edu}
}

\date{2020/11/11}

\begin{document}
\maketitle
\begin{abstract}
\noindent
 Correlation matrices are standardized covariance matrices. They form an affine space of symmetric matrices defined by setting the diagonal entries to one. We study the geometry of maximum likelihood estimation for this model and linear submodels that encode additional symmetries. We also consider the problem of minimizing two closely related functions of the covariance matrix: the Stein's loss and the symmetrized Stein's loss. Unlike the Gaussian log-likelihood these two functions are convex and hence admit a unique positive definite optimum. Some of our results hold for general affine covariance models.
\end{abstract}

\section{Introduction}

Learning a covariance matrix is arguably one of the most basic problems in statistics. Learning structured covariance matrices has many applications ranging from signal processing to high-dimensional statistics. A type of structure that is most relevant in this article is given by affine constraints. More specifically, we focus on the problem of estimating a correlation, that is, a positive definite matrix $\Sigma$ such that the diagonal entries are $1$. This problem has appeared in statistics in many different contexts  \cite{brownlees2020projected,higham2002computing,liang1986longitudinal,rousseeuw1994shape,small2000eliminating,kendall1961advanced}.  

The literature on estimating the correlation matrix focuses on two canonical problems. In the first, the data are represented by a symmetric positive semidefinite (PSD)  matrix $S$ and the goal is to estimate the correlation matrix $\Sigma$ by finding a suitable projection of $S$ on the space of correlation matrices; see e.g.  \cite{brownlees2020projected,ZUR}. In the second problem, each entry of $\Sigma$ can be estimated from a different sample and, in consequence, the data is represented by a symmetric but not necessarily PSD matrix \cite{higham2002computing}. In this paper we focus on the first scenario assuming  that $S$ is positive definite. 

The maximum likelihood estimation problem in this context has been studied recently in \cite[Section~4]{ZUR} from an optimization and probabilistic point of view. In this paper we analyze it through the lenses of algebra and geometry. 

The likelihood geometry of the bivariate correlation model has been analyzed in some detail; see \cite[Example 18.3]{kendall1961advanced}, \cite[Example 2.38]{barndorff1994inference}, and \cite[Section~2.1]{small2000eliminating}. In this bidimensional case, estimating the correlation parameter reduces to solving a cubic equation, so that the likelihood can have either one or three (real) critical points. To distinguish between these two situations it is enough to study the discriminant of the cubic polynomial. In this paper we perform a careful analysis of the discriminant locus to identify not only when there are three critical points, but also when all three points correspond to positive definite solutions (Theorem \ref{th:bivariatecritical}). We provide a similar study of the second derivative of the likelihood function to discover a rather surprising fact: for most statistically relevant data points, the likelihood function is concave (Theorem \ref{th:bivariateconvexity}). Moreover, restricting to the equicorrelation model we are able to generalize the results on the number and nature of the critical points of the likelihood to any dimension. In particular, estimating the equicorrelation parameter reduces to solving a generalized cubic equation (Theorem \ref{th:MLESigequicorr}).

A similarly detailed analysis is typically not possible for general linear correlation models. For the unrestricted correlation model we show that even in low-dimensional cases, the likelihood function can have many positive definite critical points. There are, however, some unexpected and positive features that are common to all linear correlation models. Although the likelihood function is typically not a concave function, we show that for linear correlation models there are always some data points for which this function is globally concave (Proposition~\ref{prop:convnonempty}). This is in sharp contrast to the general covariance model. 
As an alternative to the non-convex problem of optimizing the likelihood function, we propose minimizing two other functions that are also  popular in covariance matrix estimation: the Stein's loss and the symmetrized Stein's loss. We show that, like the former, the latter is strictly convex (Proposition~\ref{prop:convexSS}). All three functions appear in statistics in the context of likelihood-based inference for Gaussian data and they also lead to pseudolikelihood estimators for a wide range of non-Gaussian scenarios. Minimizing the Stein's loss in the correlation model was recently considered in \cite{brownlees2020projected}, or more generally, as a Bregman projection on affine models in \cite{dhillon2008matrix,kulis2009low}. 

The paper is organized as follows. In Section~\ref{sec:3losses} we introduce the three loss functions we consider over the space of correlation matrices. We analyze in detail the  bivariate correlation model in Section~\ref{sec:bivariate}, which serves as the basis for the study that follows. In Section~\ref{sec:unrestricted} we look at the likelihood geometry of the most general unrestricted correlation model, and provide a simple efficient algorithm to optimize Stein's loss. On the other extreme, we consider the simplest one dimensional model, the equicorrelation model, in Section~\ref{sec:equicorr}. We show that it exhibits many similarities with the bivariate case. Finally, we conclude in Section~\ref{sec:further} with natural follow-up questions and possible future directions.

\section{Three loss functions}\label{sec:3losses}

Denote by $\S^n$ the set of $n\times n$ symmetric matrices and by $\S^n_+$ its subset given by all positive definite matrices. Define the divergence function $I:\S^n_+\times \S^n_+\to \RR$
\begin{equation}\label{eq:Idiv}
I(\Sigma_1\| \Sigma_2)\;=\;\<\Sigma_1,\Sigma_2^{-1}\>-\log\det(\Sigma_1 \Sigma_2^{-1})-n,
\end{equation}
where $\<\Sigma_1,\Sigma_2^{-1}\>={\rm trace}(\Sigma_1 \Sigma_2^{-1})$ is the standard inner product on $\S^n$. Note that $I(\Sigma_1\| \Sigma_2)$ is convex both in $\Sigma_1$ and $K_2=\Sigma_2^{-1}$ (but not in $\Sigma_2$). Fenchel duality also implies that $I(\Sigma_1\| \Sigma_2)\geq 0$ with equality if and only $\Sigma_1=\Sigma_2$.

Based on this divergence function we define two functions. For a fixed $S\in \S^n_+$, the \emph{entropy loss} is $I(S||\Sigma)$. Similarly,  the \emph{Stein's loss} is defined as $I(\Sigma\|S)$. The Stein's loss is a convex function of $\Sigma$ but the entropy loss is not. Up to constant factors, the entropy loss function is the negative of the Gaussian log-likelihood and the Stein's loss is the negative dual likelihood; see \cite{sturmfels2020estimating}. For this reason the optimum of the entropy loss is called here the maximum likelihood estimator (MLE) and the optimum of the Stein's loss is called the dual MLE. 

Since the divergence is not symmetric, it is useful to define the \emph{symmetrized Stein's loss}: 
$$
L(\Sigma,S)\;=\;\frac{1}{2}\Big(I(S||\Sigma)+I(\Sigma||S)\Big)\;=\;\frac{1}{2}\Big(\<S,\Sigma^{-1}\>+\<\Sigma,S^{-1}\>\Big)-n.
$$
The symmetrized Stein's loss has been recently used in the context of high-dimensional matrix estimation, e.g. \cite{ledoit2018optimal,soloff2020covariance}. Following \cite{moakher2006symmetric} we list some basic properties of the symmetrized Stein's loss:
\begin{enumerate}
    \item [(i)] $L(\Sigma,S)$ is nonnegative, and zero if and only if $\Sigma=S$.
    \item [(ii)] $L(\Sigma,S)=L(S,\Sigma)$,
    \item [(iii)] $L(\Sigma,S)=L(\Sigma^{-1},S^{-1})$,
    \item [(iv)] $L(\Sigma,S)=L(P^T\Sigma  P,P^T S P)$ for all invertible $P\in \RR^{n\times n}$.
\end{enumerate}

These properties made the symmetrized Stein's loss a convenient tool for theoretical analysis in statistics. As we argue in this paper, it can also offer a valuable and numerically tractable alternative to both the maximum likelihood estimator and the dual likelihood estimator. This observation is based on the following, rather surprising, result. 

\begin{prop}\label{prop:convexSS}
The symmetrized Stein's loss $L(\Sigma,S)$ is a strictly convex function both in $\Sigma\in \S^n_+$ and in $K=\Sigma^{-1}$.
\end{prop}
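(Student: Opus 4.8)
The plan is to reduce the statement to one elementary lemma: \emph{for every fixed $A\in\S^n_+$, the map $\Sigma\mapsto\<A,\Sigma^{-1}\>=\tr(A\Sigma^{-1})$ is strictly convex on the convex set $\S^n_+$.} Granting this, both assertions of the proposition are immediate, and the point that makes everything work is already visible in the definition of $L$: in contrast to the entropy loss $I(S\|\Sigma)$ and the Stein's loss $I(\Sigma\|S)$ individually, the symmetrized version has no logarithmic term, the $\pm\log\det\Sigma$ contributions cancelling, so that
\[
L(\Sigma,S)\;=\;\tfrac12\<S,\Sigma^{-1}\>+\tfrac12\<\Sigma,S^{-1}\>-n .
\]
Here $\<\Sigma,S^{-1}\>$ is linear in $\Sigma$ while $\<S,\Sigma^{-1}\>$ is strictly convex in $\Sigma$ by the lemma (with $A=S$), so $L(\cdot,S)$ is the sum of a strictly convex function and an affine one, hence strictly convex on $\S^n_+$. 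For the variable $K=\Sigma^{-1}$ I would invoke property (iii): $L(\Sigma,S)=L(\Sigma^{-1},S^{-1})=L(K,S^{-1})$, and the lemma applies verbatim to the fixed second argument $S^{-1}\in\S^n_+$; equivalently, substituting $\Sigma=K^{-1}$ directly gives $L=\tfrac12\<S,K\>+\tfrac12\<S^{-1},K^{-1}\>-n$, the first term linear in $K$ and the second strictly convex in $K$ by the lemma.

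To prove the lemma I would simply compute the Hessian. Fix $\Sigma\in\S^n_+$ and a nonzero direction $H\in\S^n$, and set $\Sigma_t=\Sigma+tH$ for $t$ near $0$. Applying $\frac{d}{dt}\Sigma_t^{-1}=-\Sigma_t^{-1}H\Sigma_t^{-1}$ twice yields
\[
\frac{d^2}{dt^2}\Big|_{t=0}\tr\!\big(A\Sigma_t^{-1}\big)\;=\;2\,\tr\!\big(A\,\Sigma^{-1}H\Sigma^{-1}H\Sigma^{-1}\big).
\]
Writing $R=\Sigma^{-1/2}\in\S^n_+$, $\widetilde A=RAR\in\S^n_+$ and $\widetilde H=RHR\in\S^n\setminus\{0\}$, and permuting cyclically under the trace, the right-hand side equals $2\tr(\widetilde A\widetilde H^{2})=2\,\tr\!\big((\widetilde A^{1/2}\widetilde H)(\widetilde A^{1/2}\widetilde H)^{\!\top}\big)=2\,\|\widetilde A^{1/2}\widetilde H\|_F^{2}$, which is strictly positive since $\widetilde A^{1/2}$ is invertible and $\widetilde H\neq 0$. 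Thus the Hessian of $\Sigma\mapsto\tr(A\Sigma^{-1})$ is positive definite at every point of the convex open set $\S^n_+$, which gives strict convexity.

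The computation above is routine; the only things worth underlining are (a) the bookkeeping observation that symmetrization removes the $\pm\log\det\Sigma$ terms and leaves behind only trace expressions that are either affine or of the strictly convex form $\tr(A\,\cdot^{-1})$, and (b) the need to track \emph{strict} positivity rather than mere convexity, which is precisely why the invertible conjugation $\widetilde H=RHR$ is the right move. One could instead deduce the lemma from strict operator convexity of $t\mapsto t^{-1}$ together with the fact that $\tr(AM)>0$ whenever $A\succ0$ and $M\succeq0$ with $M\neq0$; but that route still forces one to verify that the Loewner gap $\lambda X^{-1}+(1-\lambda)Y^{-1}-(\lambda X+(1-\lambda)Y)^{-1}$ is nonzero whenever $X\neq Y$, so the direct Hessian argument is shorter and self-contained, and is the one I would write up.
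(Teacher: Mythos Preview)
Your proof is correct and follows essentially the same approach as the paper: both reduce to showing that $\Sigma\mapsto\<S,\Sigma^{-1}\>$ is strictly convex by computing the second directional derivative $2\,\tr(S\,KUKUK)$ and verifying it is strictly positive for $U\neq 0$. The only cosmetic difference is in that last verification: the paper argues that $KUKUK\succeq 0$ and then shows $UKU=0\Rightarrow U=0$ directly, whereas you conjugate by $\Sigma^{-1/2}$ and rewrite the trace as a Frobenius norm $\|\widetilde A^{1/2}\widetilde H\|_F^2$; your explicit handling of the $K$ variable via property~(iii) is also what the paper leaves as ``analogous.''
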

\begin{rem}
In the following proof and throughout we use the standard expressions for directional derivatives of functions of $\Sigma$. Denoting by $\nabla_U$ the directional derivative in the direction $U\in \S^n$ we have $\nabla_U \<\Sigma,S^{-1}\>=\<U,S^{-1}\>$ and  
$$
\nabla_U \<S,\Sigma^{-1}\>\;=\;\<S,\nabla_U \Sigma^{-1}\>\;=\;-\<S,\Sigma^{-1}U\Sigma^{-1}\>.
$$
Moreover,
$$
\nabla_U \log\det\Sigma\;=\;\<\Sigma^{-1},U\>.
$$
From this it immediately follows that the gradient of $\<S,\Sigma^{-1}\>$ is $-\Sigma^{-1}S\Sigma^{-1}$ and the gradient of $\log\det\Sigma$ is $\Sigma^{-1}$.
\end{rem}
\begin{proof}[Proof of Proposition~\ref{prop:convexSS}]
We show that $L(\Sigma,S)$ is a strictly convex function in $\Sigma$. The proof that it is also strictly convex in $K=\Sigma^{-1}$ is analogous. Since $\<\Sigma,S^{-1}\>$ is linear in $\Sigma$, it is enough to show that $\<S,\Sigma^{-1}\>$ is strictly convex. We check the second order conditions for convexity. The directional derivative in the direction $U\in \S^n$ satisfies
$$
\nabla_U \<S,\Sigma^{-1}\>\;=\;-\<S,\Sigma^{-1}U\Sigma^{-1}\>.
$$
Computing the directional derivative of $\nabla_U L(\Sigma,S)$ in the direction $V\in \S^n$ gives
$$
\nabla_V\nabla_U \<S,\Sigma^{-1}\>\;=\;\<S,\Sigma^{-1}V\Sigma^{-1}U\Sigma^{-1}\>+\<S,\Sigma^{-1}U\Sigma^{-1}V\Sigma^{-1}\>.
$$
The function $\<S,\Sigma^{-1}\>$ is strictly convex if and only if $\nabla_U\nabla_U \<S,\Sigma^{-1}\>> 0$ for all $U\in \S^n$, $U\neq 0$. We have
$$
\nabla_U\nabla_U \<S,\Sigma^{-1}\>\;=\;2\<S,KUKUK\> 
$$
where $S$ is positive definite and $KUKUK$ is positive semidefinite. It follows that  $\nabla_U\nabla_U \<S,\Sigma^{-1}\>\geq 0$ and $\nabla_U\nabla_U \<S,\Sigma^{-1}\>=0$ if and only if $KUKUK=0$ (equiv. $UKU=0$). It remains to show that, for any positive definite $K$ and symmetric $U$, $UKU=0$ if and only if $U=0$. But this is clear. If $U\neq 0$ then there exists $x$ such that $y=Ux\neq 0$. But then $x^T UKUx=y^T K y>0$ and so $UKU\neq 0$.  
\end{proof}

All three loss functions are continuously differentiable with respect to $\Sigma$, making them suitable to optimization by standard techniques. Their gradients take a simple form. We consistently use the notation $K= \Sigma^{-1}$ and $W = S^{-1}$. 

\begin{prop}\label{prop:gradients}
Let $S \in \S^+_n$ be fixed. Then the gradient of the entropy loss is 
$$
\nabla I(S\| \Sigma)\;=\; K - KSK.
$$
The gradient of the Stein's loss is 
$$
\nabla I(\Sigma\| S)\;=\; W - K.
$$
The gradient of the symmetrized Stein's loss is 
$$
\nabla L(\Sigma,S)\;=\; \frac{1}{2}(W - KSK).
$$
\end{prop}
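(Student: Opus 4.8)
The plan is to reduce everything to three elementary building blocks whose gradients are recorded in the Remark: the linear functional $\langle\Sigma,S^{-1}\rangle$, the function $\langle S,\Sigma^{-1}\rangle$, and $\log\det\Sigma$. The first step is to split $\log\det(\Sigma_1\Sigma_2^{-1}) = \log\det\Sigma_1 - \log\det\Sigma_2$ in the definition \eqref{eq:Idiv}, so that, viewing $S$ as fixed, the entropy loss and Stein's loss become
$$
I(S\|\Sigma)\;=\;\langle S,\Sigma^{-1}\rangle+\log\det\Sigma+c_1,\qquad I(\Sigma\|S)\;=\;\langle\Sigma,S^{-1}\rangle-\log\det\Sigma+c_2,
$$
with $c_1=-\log\det S-n$ and $c_2=\log\det S-n$ constant in $\Sigma$. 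After this, each gradient is read off term by term, using that the gradient of a sum is the sum of gradients.

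First I would treat the entropy loss: by the Remark the gradient of $\langle S,\Sigma^{-1}\rangle$ is $-\Sigma^{-1}S\Sigma^{-1}=-KSK$ and the gradient of $\log\det\Sigma$ is $\Sigma^{-1}=K$, so adding gives $\nabla I(S\|\Sigma)=K-KSK$. Next, for the Stein's loss, the term $\langle\Sigma,S^{-1}\rangle$ is linear with $\nabla_U\langle\Sigma,S^{-1}\rangle=\langle U,S^{-1}\rangle$, hence its gradient is $S^{-1}=W$, while the gradient of $-\log\det\Sigma$ is $-K$; this yields $\nabla I(\Sigma\|S)=W-K$.

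Finally, since $L(\Sigma,S)=\tfrac12\big(I(S\|\Sigma)+I(\Sigma\|S)\big)$ by definition, linearity of the gradient gives
$$
\nabla L(\Sigma,S)\;=\;\tfrac12\big((K-KSK)+(W-K)\big)\;=\;\tfrac12(W-KSK),
$$
which is the claimed formula.

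I do not anticipate a genuine obstacle here: the computation is a direct application of the directional derivative identities already established in the Remark. The only point requiring care is the bookkeeping when splitting $\log\det(\Sigma_1\Sigma_2^{-1})$ — in particular tracking the sign of the $\log\det\Sigma$ contribution, which appears with a $+$ in the entropy loss and a $-$ in the Stein's loss — and noting that all remaining terms are constant in $\Sigma$ and hence drop out of the gradient.
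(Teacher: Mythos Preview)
Your proof is correct and follows exactly the approach the paper indicates: it applies the directional derivative identities from the Remark (and, implicitly, the computation in the proof of Proposition~\ref{prop:convexSS}) term by term after splitting $\log\det(\Sigma_1\Sigma_2^{-1})$. The paper's own proof merely points back to those computations without writing them out, so your version is just a more explicit rendering of the same argument.
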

\begin{proof}
The expression for the gradient of $L(\Sigma,S)$ follows from the proof of Proposition~\ref{prop:convexSS}. The other two cases can be verified in a similar way.
\end{proof}

Denote by $\S^n_0$ the set of all $n\times n$ symmetric matrices with zeros on the diagonal. The set of correlation matrices is precisely $(I_n+\S^n_0)\cap \S^n_+$, where $I_n$ is the identity matrix. Given a vector subspace $\cL\subseteq \S^n_0$ we consider the linear correlation model
\begin{equation}\label{eq:corrmodel}
(I_n+\cL)\cap \S^n_+.
    \end{equation}
Given a fixed matrix $S\in \S^n_+$, the goal is to optimize the entropy loss, the Stein's loss, and the symmetrized Stein's loss over this model.

The following result gives equations describing the critical points in the problem of minimizing the three loss functions over any spectrahedron $(A+\cL)\cap \S^n_+$. The case of correlation models is recovered by taking $A=I_n$ and $\cL\subseteq \S^n_0$.
\begin{thm}\label{th:critical}
Consider the model obtained by restricting $\S^n_+$ to an affine subspace $A+\cL$ for $A\in \S^n_+$ and a linear subspace $\cL\subseteq \S^n$. A necessary condition for $\Sigma\in \S^n$ to be a local minimum of the entropy loss is that 
\begin{equation}\label{eq:criticalML}
\Sigma-A\in \cL\quad \mbox{ and }\quad K-KSK\in\cL^\perp.
\end{equation}
A necessary condition for $\Sigma$ to be a local minimum of the Stein's loss
\begin{equation}\label{eq:criticalS}
\Sigma-A\in \cL\quad \mbox{ and }\quad K-W\in\cL^\perp.
\end{equation}
 A necessary condition for $\Sigma$ to be a local minimum of the symmetrized Stein's loss
\begin{equation}\label{eq:criticalSS}
\Sigma-A\in \cL\quad \mbox{ and }\quad W-KSK\in\cL^\perp.
\end{equation}
If $\Sigma\in \S^n_+$, the last two conditions are also sufficient.
\end{thm}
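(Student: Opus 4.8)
The plan is to treat all three loss functions uniformly via the method of Lagrange multipliers (equivalently, projecting the gradient onto the tangent space of the affine slice). Since the feasible set $(A+\cL)\cap\S^n_+$ is relatively open in the affine space $A+\cL$, a local minimum $\Sigma$ lying in $\S^n_+$ must be a critical point of the restricted objective, i.e. the gradient of the objective must vanish along every direction tangent to $A+\cL$. The tangent space at any point of $A+\cL$ is exactly $\cL$, so the first-order condition is: $\Sigma-A\in\cL$ (feasibility/membership in the affine space) together with $\nabla f(\Sigma)\perp\cL$, i.e. $\nabla f(\Sigma)\in\cL^\perp$. Plugging in the three gradient formulas from Proposition~\ref{prop:gradients} — namely $K-KSK$, $K-W$, and $\tfrac12(W-KSK)$ — and noting that the scalar $\tfrac12$ is irrelevant for membership in the subspace $\cL^\perp$, yields exactly \eqref{eq:criticalML}, \eqref{eq:criticalS}, and \eqref{eq:criticalSS} respectively. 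This establishes necessity for all three.

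For the sufficiency claim for the Stein's loss and the symmetrized Stein's loss when $\Sigma\in\S^n_+$, the key point is convexity. By Proposition~\ref{prop:convexSS} the symmetrized Stein's loss $L(\cdot,S)$ is (strictly) convex on $\S^n_+$, and the Stein's loss $I(\cdot\|S)=\<\Sigma,W\>-\log\det\Sigma+\log\det S-n$ is convex on $\S^n_+$ as well (indeed strictly, since $-\log\det$ is strictly convex and $\<\Sigma,W\>$ is linear). The restriction of a convex function to the affine subspace $A+\cL$ remains convex, and the feasible region $(A+\cL)\cap\S^n_+$ is convex and open in $A+\cL$. Hence any point satisfying the first-order stationarity conditions \eqref{eq:criticalS} or \eqref{eq:criticalSS} is automatically a global (and in particular local) minimizer over the model: for a differentiable convex function on a convex set, vanishing of the directional derivatives in all feasible directions is both necessary and sufficient for a minimum. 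This is why the caveat $\Sigma\in\S^n_+$ matters — it guarantees we are in the interior where the gradient formulas are valid and where convexity applies — and it also explains why the entropy loss is excluded from the sufficiency statement: the entropy loss $I(S\|\Sigma)$ is not convex in $\Sigma$, so a critical point need not be a minimum.

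The main subtlety, and the step I would be most careful about, is the precise justification that the tangent space to the feasible set at an interior point $\Sigma\in\S^n_+$ is all of $\cL$ (so that the stationarity condition genuinely reads $\nabla f\in\cL^\perp$ with no extra constraints), together with the clean statement of the first-order optimality condition for constrained minimization on an affine set. Concretely, one writes, for $U\in\cL$, the function $t\mapsto f(\Sigma+tU)$, which is well-defined for $|t|$ small since $\S^n_+$ is open, and differentiates at $t=0$ to get $\<\nabla f(\Sigma),U\>$; a local minimum forces this to be zero for all $U\in\cL$, i.e. $\nabla f(\Sigma)\in\cL^\perp$, and conversely in the convex case this suffices. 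Everything else — substituting the three gradients, and invoking convexity — is routine given Propositions~\ref{prop:convexSS} and~\ref{prop:gradients}.
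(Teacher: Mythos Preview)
Your proof is correct and follows essentially the same approach as the paper: both use that the model is relatively open in $A+\cL$ so that first-order optimality forces the directional derivatives along $\cL$ to vanish, then substitute the gradients from Proposition~\ref{prop:gradients}, and invoke convexity (Proposition~\ref{prop:convexSS} and the strict convexity of $-\log\det$) for sufficiency in the Stein and symmetrized Stein cases. Your write-up is simply more explicit about the tangent-space identification and the irrelevance of the factor $\tfrac12$, but the argument is the same.
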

\begin{proof}
All three loss functions are continuously differentiable and the model $(A+\cL)\cap \S^n_+$ is relatively open in $A+\cL$. Therefore, a necessary condition for $\Sigma\in A+\cL$ to be a local minimum is that all directional derivatives in the directions $U\in \cL$ are zero. Conditions (\ref{eq:criticalML}), (\ref{eq:criticalS}), and (\ref{eq:criticalSS}) then follow from the gradient formula in Proposition~\ref{prop:gradients}. The last two conditions are also sufficient if $\Sigma\in \S^n_+$ because  these optimization problems are convex, see Proposition~\ref{prop:convexSS}.  
\end{proof}

Since the Stein's loss and the symmetrized Stein's loss are convex functions of $\Sigma$, these two functions admit a unique optimum $(I_n+\cL)\cap \S^n_+$. However, the entropy loss may admit several local optima. In all cases there will typically be many complex critical points. Given the algebraic nature of the critical equations, this number famously does not depend on $S$ if it is generic.
\begin{dfn}
Let $S$ be a generic covariance matrix. The \emph{ML degree} is the number of complex solutions to \eqref{eq:criticalML}, the \emph{dual ML degree} the number of complex solutions to \eqref{eq:criticalS}, and the \emph{SSL degree} the number of complex solutions to \eqref{eq:criticalSS}.
\end{dfn}

While the ML degree and dual ML degree\footnote{The term \textit{reciprocal ML degree} has been used recently to refer to the dual ML degree \cite{pencils, boege2020reciprocal}. We prefer \textit{dual} from an optimization standpoint, following the original definition from Efron \cite{efron1978geometry}.} for linear covariance models have been studied before in algebraic statistics (see \cite{sturmfels2020estimating, coons2020maximum}), the SSL degree we propose is novel and it should be interesting to study it further. Note that, unlike the function $I(\Sigma_1\| \Sigma_2)$ whose critical points give rise to the first two degrees, the symmetrized Stein's loss $L(\Sigma,S)$ is itself an algebraic function.

\section{The bivariate correlation model}\label{sec:bivariate}

In this section we study in detail the bivariate correlation model. Thus, $n=2$ and the model consists of all matrices of the form
$$
\Sigma=\begin{bmatrix}
1 & \rho\\
\rho & 1
\end{bmatrix}\qquad\mbox{for }\rho\in (-1,1).
$$
The problem of optimizing the entropy loss over this model is relatively well studied; see \cite[Example 18.3]{kendall1961advanced}, \cite[Example 2.38]{barndorff1994inference}, and \cite[Section~2.1]{small2000eliminating}. Here we largely extend the existing results. 

\subsection{Critical points of the entropy loss}

The derivative of the entropy loss $I(S\| \Sigma)$ with respect to $\rho$ is $\tfrac{2}{(1-\rho^2)^2} f(\rho)$, where
$$
f(\rho)\;=\;\rho(\rho^2-1)-S_{12}(1+\rho^2)+\rho(S_{11}+S_{22}).
$$
This polynomial has three roots, two of which may be complex. Note that $f(-1)=-2S_{12}-S_{11}-S_{22}<0$ and $f(1)=-2S_{12}+S_{11}+S_{22}>0$ (since $S$ is positive definite) and hence $f$ has at least one root in the open interval $(-1,1)$. As a consequence, if $f(\rho)$ has a unique root in $\RR$, this unique root corresponds to a positive definite correlation matrix.

The polynomial $f(\rho)$ depends on $S$ only through $a=\tfrac{S_{11}+S_{22}}{2}$ and $b=S_{12}$ and with this notation we have
\begin{equation}\label{eq:cubic}
f(\rho)\;=\;\rho^3-b\rho^2-\rho(1-2a)-b.
\end{equation}
The condition that $S$ is positive definite implies that $a>|b|$. Following \cite{kendall1961advanced} we use the discriminant to get the conditions on $S$ that ensure there is a single critical point. The discriminant is
\begin{equation}\label{discriminantfor2}
 \Delta_f(b,a) = -4[ b^4 - (a^2+8a-11)b^2+(2a-1)^3]
\end{equation}
and $f(\rho)$ has a single root in $\RR$ if and only if $\Delta_f(b,a)<0$. When $\Delta_f(b,a)=0$ there is an additional double root and there are three real roots when $\Delta_f(b,a)>0$.

\begin{figure}[ht]
 \centering 
 \includegraphics[width=10cm]{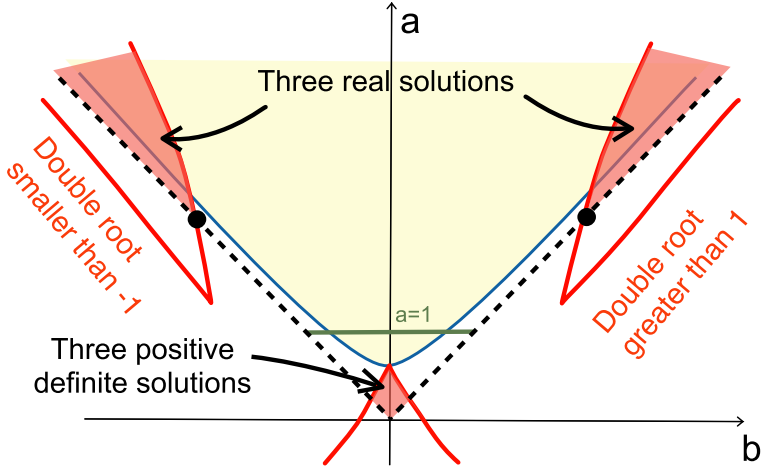} 
\caption{Likelihood geometry for the bivariate correlation model. The boundary of the positive definite cone is given by $a = \vert b \vert$ (dashed black line). The zero set of the discriminant $\Delta_f(b,a)$ is given by a red curve. In the red regions, the cubic in (\ref{eq:cubic}) has three real solutions. The yellow shaded region corresponds to $(b,a)$ for which the entropy loss is convex; see Figure~\ref{fig:convexity}.}\label{fig:realsolutions} 
\end{figure}

To add to the standard discussion of this example we note that even when $f(\rho)$ has three real roots, the other two roots may not lie in $(-1,1)$ in which case the entropy loss still has a unique optimum in the constrained region. To get the full understanding of the likelihood geometry we depict the situation in Figure~\ref{fig:realsolutions}. The red curves represent the zero locus of the discriminant $\Delta_f(b,a)$. This zero locus divides the cone $a>|b|$ into four chambers: the central region where the discriminant is negative and three red regions where it is positive. All $(b,a)$ that lie in the central chamber lead to a unique real critical point: the MLE. The points on the discriminant locus correspond to an additional double root. Since $f(1)=2(a-b)$, $f$ has a root at $\rho=1$ if and only if $a=b$. This shows that the right top part of the discriminant corresponds to a double point in $(1,\infty)$. Indeed, the double point depends continuously on $(b,a)$ and the only point where it could be equal to $1$, namely when $a=b=3+2\sqrt{2}$ (the right black dot in Figure \ref{fig:realsolutions}), it is equal to $\rho = 1 + \sqrt{2} > 1$. Similarly, the fact that $f(-1)=-2(a+b)$ implies that the left component of the discriminant corresponds to double roots in $(-\infty,-1)$. At the point where it could be equal to $-1$, $a=-b=3+2\sqrt{2}$, it is actually equal to $\rho = -1-\sqrt{2} < -1$. Finally, the central component of the discriminant corresponds to double points $\rho \in (-1,1)$. The two points where the discriminant locus intersects the boundary point are $a=b=3-2\sqrt{2}$ and $a=-b=3-2\sqrt{2}$. These correspond to the double roots $\rho=1-\sqrt{2}$ and $\rho=-1+\sqrt{2}$ respectively. This completes the picture of the situation.

\begin{thm}\label{th:bivariatecritical}
For the bivariate correlation model, there is at least one positive definite critical point of the entropy loss function. There are three positive definite critical points if and only if $\Delta_f(b,a)>0$ and $a<\frac12$ (corresponding to the bottom red region in Figure~\ref{fig:realsolutions}).   
\end{thm}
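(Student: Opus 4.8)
The plan is to separate the existence claim from the count. Part~1 is already contained in the discussion preceding the theorem: positive definiteness of $S$ gives $a>|b|$, hence $f(-1)=-2(a+b)<0<2(a-b)=f(1)$, so $f$ has a root $\rho\in(-1,1)$, and such a root yields a positive definite $\Sigma$ since $\det\Sigma=1-\rho^2>0$. I would simply cite this. For Part~2, the reformulation I would use is that a matrix of the bivariate model is a positive definite critical point of the entropy loss precisely when its parameter $\rho\in(-1,1)$ is a root of $f$ (the loss has derivative $\tfrac{2}{(1-\rho^2)^2}f(\rho)$ on $(-1,1)$). So I need to know for which $(b,a)$ all three roots of $f$ lie in $(-1,1)$. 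As $f(-1)f(1)<0$, the number $N(b,a)$ of roots of $f$ in $(-1,1)$, counted with multiplicity, is odd, hence $1$ or $3$; in particular three distinct positive definite critical points already forces $\Delta_f(b,a)>0$.

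The core is a connectedness/continuity principle: on the open cone $\{a>|b|\}$ with the discriminant curve $\{\Delta_f=0\}$ removed, $N$ is locally constant. Indeed the roots of $f$ depend continuously on $(b,a)$; a real root can leave $(-1,1)$ only by passing through $\pm1$, which forces $f(\pm1)=0$, i.e. $a=\pm b$ (the cone boundary); and a conjugate pair of complex roots becomes real only on $\{\Delta_f=0\}$. By the discussion preceding the theorem, $\{\Delta_f>0\}\cap\{a>|b|\}$ is exactly the union of the three red chambers of Figure~\ref{fig:realsolutions}. On the chamber containing $(b,a)=(0,\tfrac14)$ one has $f(\rho)=\rho(\rho^2-\tfrac12)$, whose roots $0,\pm\tfrac{1}{\sqrt2}$ all lie in $(-1,1)$, so $N\equiv3$ there. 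On each of the other two chambers the preamble's analysis of the discriminant locus shows that the repeated root on the adjacent component of $\{\Delta_f=0\}$ lies in $(1,\infty)$ (respectively $(-\infty,-1)$); hence two of the three roots lie outside $[-1,1]$ while the third, simple root is always in $(-1,1)$ by Part~1, so $N\equiv1$ on those two chambers. Thus, among the red chambers, $N=3$ exactly on the one through $(0,\tfrac14)$. (If one prefers not to invoke the figure, one can instead compute the roots of $f$ at a single explicit point of each chamber; the substitution $b\mapsto-b$, which negates the roots of $f$, relates the two outer chambers, so only one extra computation is needed.)

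To finish I would identify that chamber by the sign of $a-\tfrac12$. Setting $a=\tfrac12$ in (\ref{discriminantfor2}) gives
$$\Delta_f\big(b,\tfrac12\big)\;=\;-4\Big[b^4+\tfrac{27}{4}b^2\Big]\;=\;-4\,b^2\Big(b^2+\tfrac{27}{4}\Big)\;\leq\;0,$$
so the line $\{a=\tfrac12\}$ is disjoint from $\{\Delta_f>0\}$, and therefore each red chamber, being connected, lies entirely inside $\{a<\tfrac12\}$ or entirely inside $\{a>\tfrac12\}$. The chamber through $(0,\tfrac14)$ lies in $\{a<\tfrac12\}$, while the other two have boundary points at $a=\pm b=3+2\sqrt2$ (identified in the discussion above), hence contain points with $a>\tfrac12$ and lie in $\{a>\tfrac12\}$. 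Therefore $\{\Delta_f>0\}\cap\{a>|b|\}\cap\{a<\tfrac12\}$ is precisely the bottom chamber, which is exactly the locus $N=3$; combined with Part~1 this gives the theorem.

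The step I expect to be the main obstacle is the middle one — checking rigorously that $N$ is constant on each chamber and correctly pinning down its value ($3,1,1$) on the three red regions. The continuity bookkeeping is routine once framed correctly, but one must either lean on the preamble's description of the discriminant locus (which is where the \emph{bottom red region} of the statement comes from) or supply one sample point per chamber; by contrast the displayed identity and the factorization $f(\rho)=\rho(\rho^2-\tfrac12)$ at $(0,\tfrac14)$ are immediate.
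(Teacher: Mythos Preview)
Your proposal is correct and follows essentially the same route as the paper: both arguments use that $f(-1)<0<f(1)$ forces an odd number of roots in $(-1,1)$, then invoke continuity of the roots in $(b,a)$ to argue that $N(b,a)$ is constant on each chamber of $\{a>|b|\}\setminus\{\Delta_f=0\}$, and finally read off $N$ on each red chamber from the preamble's identification of where the double root sits.

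Your write-up is in fact more complete than the paper's in one respect: the paper's proof simply refers to ``the bottom red region'' without verifying the algebraic characterization $a<\tfrac12$ stated in the theorem, whereas you supply the identity $\Delta_f(b,\tfrac12)=-4b^2(b^2+\tfrac{27}{4})\le 0$ to show that the line $a=\tfrac12$ separates the bottom chamber from the two lateral ones, together with the sample point $(0,\tfrac14)$ (where $f(\rho)=\rho(\rho^2-\tfrac12)$) to anchor the bottom chamber. This is a genuine improvement in rigor over the paper's version, which leans on Figure~\ref{fig:realsolutions} for that step.
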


\begin{proof}
Since $S$ is positive definite we have that $a>|b|$. Since $f(-1)<0$ and $f(1)>0$, it is clear that $f$ has always at least one root in $(-1,1)$. As noted above, we get a unique real root (and hence a unique root in (-1,1)) when $(b,a)$ lies in the central chamber of the cone $a>|b|$. Crossing the discriminant into the right red region introduces two extra real roots in $(1,\infty)$. Since the roots of the cubic are continuous in $(b,a)$, we are guaranteed that none of them moves to $(-1,1)$ unless we cross the dashed line. By the same continuity argument, we see that the left red region has exactly one solution in $(-1,1)$ and that the bottom red region corresponds to exactly three solutions in $(-1,1)$.
\end{proof}

\subsection{Convexity of the entropy loss}

Even when the entropy loss has a unique minimum, it does not need to be convex for $\rho\in (-1,1)$. To understand better the complexity of this optimization procedure we now study precisely when the entropy loss is convex.  We do it by studying the second order condition for convexity. The second derivative of the entropy loss with respect to $\rho$ is equal to $\tfrac{2}{(1-\rho^2)^3}g(\rho)$, where
$$
g(\rho)=2a-1   - 6b\rho + 6a\rho^2 - 2b\rho^3 + \rho^4.
$$
The entropy loss is convex if and only if $g(\rho)\geq 0$ for $\rho\in (-1,1)$. We have $g(-1)=8(a+b)$ and $g(1)=8(a-b)$. Both quantities are strictly positive in the cone $a>|b|$ and they can be zero only if $a=-b$ and $a=b$ respectively. 

As for the polynomial $f$, we get a full picture by careful analysis of the discriminant locus. The discriminant $\Delta_g(b,a)$ of $g(\rho)$ is
\begin{eqnarray}\label{eq:discpos}
\Delta_g\;:= \;-256\Big( 27b^6 -27(2a^2+6a-5)b^4+ 9(3a^4+36a^3-32a^2+8a+1)b^2 \nonumber \\[-0.1in] 
-(2a-1)(9a^2-2a+1)^2 \Big)
\end{eqnarray}

\begin{figure}[ht]
 \centering 
 \includegraphics[width=10cm]{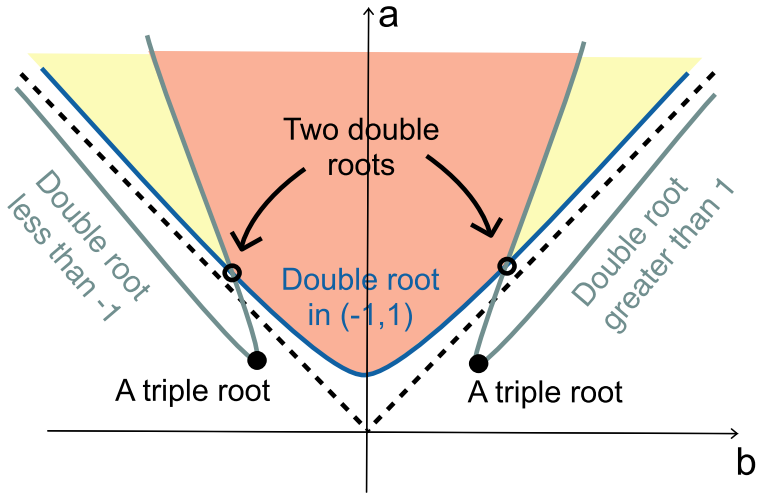} 
\caption{Likelihood geometry for the bivariate correlation model. To the plot of the cone in Figure~\ref{fig:realsolutions} we add in the zero set of the sextic discriminant $\Delta_g(b,a)$. It is given by the blue curve (the central component) and the two green curves.} \label{fig:convexity} 
\end{figure}
The somewhat complex boundary where the sextic $\Delta_g(b,a)=0$ vanishes is sketched in Figure~\ref{fig:convexity}. The \emph{central component of the discriminant} is given by the blue line that crosses the $a$-axis at $(b,a)=(0,\tfrac{1}{2})$. Two extra components that intersect the graph of $a=|b|$ (dashed line) are given in green. The points $(b,a)$ where the discriminant is non-negative, given by the reddish cone in the center, corresponds to the situation when $g(\rho)\geq 0$ for all $\rho\in \RR$. In this case the entropy function is of course convex in $(-1,1)$ but convexity still holds under weaker conditions. The discriminant locus corresponds to double roots of the equation $g(\rho)=0$. Since $g(1)=8(a-b)$, $g$ can have a root at $\rho=1$ if and only if $a=b$. This implies that the upper right branch of the discriminant locus corresponds to double roots in $(1,\infty)$. We check this in the same way as earlier for the polynomial $f$ using the fact that double roots are continuous in $(b,a)$ and the point $(b,a)$ on the intersection of the right branch of the discriminant locus on the dashed line corresponds to a double root greater than 1. Similarly, $g(-1)=8(a+b)$ is zero if and only if $a=-b$, which implies that the left upper branch of the discriminant locus corresponds to a double root in $(-\infty,-1)$. The central component of this locus corresponds to a double root in $(-1,1)$.
 
\begin{thm}\label{th:bivariateconvexity}
The entropy function in the bivariate correlation model is convex over $(-1,1)$ if and only $(b,a)$ lies in the region above the central component of the discriminant $\Delta_g(b,a)$ (corresponding to the union of the reddish and two yellow regions in Figure~\ref{fig:convexity}). 
\end{thm}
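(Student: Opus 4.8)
The plan is to reduce convexity of the entropy loss on $(-1,1)$ to nonnegativity of the quartic $g$ there, and then to settle this by a connectedness argument on the cone $C:=\{(b,a):a>|b|\}$ using the analysis of the discriminant locus $\{\Delta_g=0\}$ already carried out above. For the reduction, recall that the second derivative of the entropy loss with respect to $\rho$ is $\tfrac{2}{(1-\rho^2)^3}g(\rho)$, whose prefactor is positive on $(-1,1)$; hence the entropy loss is convex on $(-1,1)$ if and only if $g(\rho)\ge 0$ for all $\rho\in(-1,1)$. Since $g(1)=8(a-b)>0$ and $g(-1)=8(a+b)>0$ strictly on the open cone $C$, this is equivalent to $g\ge 0$ on the closed interval $[-1,1]$, i.e.\ to $\phi(b,a):=\min_{\rho\in[-1,1]}g(\rho)\ge 0$. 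The map $\phi$ is continuous on $C$, so $N:=\{\phi<0\}$ and $M:=\{\phi>0\}$ are open and disjoint, with $C=N\cup M\cup Z$ where $Z:=\{\phi=0\}$.

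The crux is to show that $Z$ is contained in the central (blue) component of $\{\Delta_g=0\}$. Indeed, if $\phi(b,a)=0$ then, since $g$ is strictly positive at $\pm1$, the minimum is attained at an interior point $\rho^\ast\in(-1,1)$; there $g(\rho^\ast)=0$ and, being an interior minimum, $g'(\rho^\ast)=0$, so $\rho^\ast$ is a repeated root of $g$ lying in $(-1,1)$. Hence $\Delta_g(b,a)=0$, and by the identification of the components of the discriminant locus made above -- the two green branches carry only repeated roots outside $[-1,1]$, whereas the central branch carries the repeated roots in $(-1,1)$ -- the point $(b,a)$ lies on the central component. (The finitely many points where blue and green meet, corresponding to quartics with two distinct double roots, still lie on the central component whenever one double root is in $(-1,1)$, so they cause no trouble.)

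To conclude, the central component divides $C$ into two connected regions (see Figure~\ref{fig:convexity}), $C_{\mathrm{above}}$, the one of large $a$ containing the reddish and yellow regions, and $C_{\mathrm{below}}$. Since $Z\subseteq$ blue, both regions are disjoint from $Z$, hence each is covered by the two disjoint relatively open sets $N$ and $M$ and, being connected, lies entirely in one of them. Evaluating along the $a$-axis decides which: for $(0,a)$ with $a>\tfrac12$ one has $g(\rho)=\rho^4+6a\rho^2+(2a-1)>0$ for all $\rho$, so $(0,a)\in M$ and $C_{\mathrm{above}}\subseteq M$ (convex), while for $(0,a)$ with $0<a<\tfrac12$ one has $g(0)=2a-1<0$, so $(0,a)\in N$ and $C_{\mathrm{below}}\subseteq N$ (not convex); on the central component itself $\phi\le0$, and convexity holds in the non-strict sense exactly where $\phi=0$, which is the boundary case of the statement. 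The part of the argument that needs care is not the computation but the geometric input: that the central component is a single arc splitting $C$ into exactly two pieces, and that the repeated root attached to each discriminant branch is located as claimed. Both follow from the discussion preceding the theorem, which tracks the double roots of $g$ continuously in $(b,a)$ and locates them via the wall values $g(\pm1)=8(a\mp b)$ on $a=|b|$; granting this, the theorem reduces to the short connectedness argument just given.
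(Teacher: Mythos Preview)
Your proof is correct and follows essentially the same approach as the paper: both reduce to nonnegativity of $g$ on $(-1,1)$, use $g(\pm1)>0$, rely on the prior classification of the discriminant branches by the location of their associated double roots, and finish with a continuity/connectedness argument. Your packaging via the continuous minimum $\phi(b,a)=\min_{[-1,1]}g$ and an explicit test point on the $a$-axis is slightly more formal than the paper's root-tracking across branches, but the substance is the same.
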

\begin{proof}
Since $g(\rho)$ is globally positive for $(b,a)$ inside the central chamber determined by the components (the reddish region in the figure) and because the roots are continuous in $(b,a)$, crossing one of the green lines can introduce roots only in $(-\infty,-1)$ or in $(1,+\infty)$. We conclude that $g(\rho)$ has no root in $(-1,1)$ unless we are below the central component. In particular, it remains always positive in the interval $(-1,1)$ and this is enough to guarantee convexity. Crossing the central component of the discriminant locus introduces two roots $-1<\rho_1<\rho_2<1$. Then $g$ is negative in $(\rho_1,\rho_2)$ and therefore the entropy loss is not convex in this region. 
\end{proof}

We conclude this section by noting that the analysis is much simpler for the Stein's loss, where the unique maximizer is given explicitly. We study this and the symmetrized Stein's loss in Section~\ref{sec:equicorr} as the special case of the equicorrelation model. There we show that the dual ML degree is 2 and give the formula for the optimum of the Stein's loss. We also show that the SSL degree is 4. We find this last number to be surprising. It indicates that, although the symmetrized Stein's loss is convex (and hence easier to handle from an optimization point of view), from an algebraic standpoint it is more complicated.

\section{The unrestricted correlation model}\label{sec:unrestricted}

The unrestricted model corresponds to the case when $\cL=\S^n_0$ in (\ref{eq:corrmodel}). Recall that we denote $K=\Sigma^{-1}$ and $W=S^{-1}$. By Theorem~\ref{th:critical}, any optimal correlation matrix must satisfy:
\begin{enumerate}
    \item [(a)] for the entropy loss: $K_{ij}=(KSK)_{ij}$ for $i\neq j$,
    \item [(b)] for the Stein's loss: $K_{ij}=W_{ij}$ for $i\neq j$,
    \item [(c)] for the symmetrized Stein's loss:  $(KSK)_{ij}=W_{ij}$ for  $i\neq j$.
\end{enumerate}

Solving these equations leads to high-degree polynomial systems. Some results on the ML degree, dual ML degree, and SSL degree for small $n$ are reported in Table~\ref{tab:MLdegcorr}. The case $n=2$ was covered in great detail in Section~\ref{sec:bivariate}. If $n=3$ the dual ML degree is 5, the ML degree is 15, and the SSL degree is 28. Overall, the table indicates that the SSL degree increases faster than the ML degree, which increases faster than the dual ML degree. We currently have no conjectures on a general formula for these sequences. The numbers for $n>4$ were computed using \texttt{LinearCovarianceModels.jl} as introduced in \cite{sturmfels2020estimating}.

\begin{table}[h]
	\begin{tabular}{@{}l| cccccc ccccc @{}}
		  $n$ & 1 & 2 & 3 & 4 & 5 & 6 & 7 & 8 & 9  \\ \midrule
		  	SSL degree   & 1 & 4 & 28 & 292 & ? & ? & ? & ? & ?   \\ \midrule
		ML degree   & 1 & 3 & 15 & 109 & 1077 & 13695 & ? & ? & ?   \\ \midrule
		dual ML degree & 1 & 2 & 5 & 14 & 43 & 144 & 522 & 2028 & 8357   \\
			\bottomrule   
	\end{tabular}  
\caption{The different algebraic degrees for the space of correlation matrices. }\label{tab:MLdegcorr}
	\end{table}

\subsection{Likelihood geometry}

\paragraph{Positive definite critical points} The entropy loss may lead to many local optima, which can substantially complicate the maximum likelihood estimation. We provide basic analysis moving directly to the case $n=3$ with the ML degree equal to 15. To obtain a better understanding of the multi-modality of the entropy loss, we study what happens to the MLE and critical points when we observe the ray $S = t I_n$ (multiple of the identity matrix). Denote by $(s_{12},s_{13},s_{23})$ the coordinates of the linear space $I_3+\S^3_0$. The 15 critical points are exactly:
\begin{enumerate}
    \item [(a)] The origin $(0,0,0)$ (i.e. the identity matrix)
    \item [(b)] The six points $(\pm \sqrt{1-2t}, 0,0)$, $(0,\pm \sqrt{1-2t},0)$ and $(0,0,\pm \sqrt{1-2t})$.
    \item [(c)] The four points $(\alpha,-\alpha,\alpha), (-\alpha,-\alpha,\alpha), (\alpha,\alpha,-\alpha),(-\alpha,-\alpha,-\alpha)$. 
    \item [(d)] The four points $(\beta,-\beta,\beta), (-\beta,-\beta,\beta), (\beta,\beta,-\beta),(-\beta,-\beta,-\beta)$. 
\end{enumerate}
where
    $$\alpha = \frac{t-1 + \sqrt{t^2-18t+9}}{4} \quad \beta = \frac{t-1 - \sqrt{t^2-18t+9}}{4}. $$

First we see the nature of these critical points. All of them are real when $0<t<\frac{1}{2}$. The roots of $t^2-18t+9$ are $t= 3(3\pm 2\sqrt{2)}$ and the points in (c) and (d) are real whenever $0<t<3(3-2\sqrt{2})$ or $t>3(3+2\sqrt{2})$. However, if we want them all to be positive definite, we need to restrict to the first interval. Thus the number of positive definite critical points is $15$ when $0<t< 1/2$. When $t=1/2$ seven of them collapse to the origin which gives $9$ positive definite critical points which is then true for all $1/2\leq t<3(3-2\sqrt{2})$. When $t=3(3-2\sqrt{2})$ the eight points in (c) and (d) collapse to four points giving 5 positive definite critical points. For $t>3(3-2\sqrt{2})$ we have a single positive definite solution. 

By evaluating each of these critical points in the entropy loss, we obtain expressions that we can compare for every $t>0$. All the points in the same group (a), (b), (c), (d) or (e) evaluate to the same likelihood value. Thus, for example, when $t<\frac12$, we see that the points in (d) give the MLE. On the other hand, when $t>3(3-2\sqrt{2)}$, then the origin is the MLE.

\paragraph{Convexity of the entropy function} The entropy function is, in general, not convex. In the bivariate case we have seen however that convexity over the set of correlation matrices $I_n+\S^n_0$ happens for many data points. This motivates the following definition.
\begin{dfn}\label{df:convcone}
For an affine model $M=(A+\cL)\cap \S^+_n$, its \emph{convexity cone} $C(M)$ is the set of all $S\in \S^n_+$ for which $I(S\|\Sigma)$ is a convex function on $M$.
\end{dfn}

In the bivariate correlation model the convexity cone is described in Theorem~\ref{th:bivariateconvexity}. The name of the convexity cone is justified by the following result.
\begin{prop}\label{prop:convcone}
Fix an affine model $M\subseteq \S^n_+$. Then $C(M)$ is  closed under multiplications by a scalar $t\geq 1$.
\end{prop}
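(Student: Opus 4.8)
The plan is to reduce the statement to the following claim: if $S \in \S^n_+$ makes $I(S\|\Sigma)$ convex on $M$, then so does $tS$ for every $t \geq 1$. The natural route is to work with the Hessian of $I(S\|\Sigma)$ restricted to the tangent space $\cL$ and track its dependence on $S$. Recall from the Remark preceding Proposition~\ref{prop:gradients} that $\nabla_U \langle S,\Sigma^{-1}\rangle = -\langle S, \Sigma^{-1}U\Sigma^{-1}\rangle$, and differentiating once more (as in the proof of Proposition~\ref{prop:convexSS}) the second directional derivative of $I(S\|\Sigma)$ in direction $U\in\cL$ is
$$
\nabla_U\nabla_U I(S\|\Sigma)\;=\;2\langle S, KUKUK\rangle - \langle K, UKU\rangle\;=\;2\,\langle S, KUKUK\rangle - \langle \Sigma, KUKUK\rangle,
$$
using $\langle K, UKU\rangle = \langle \Sigma^{-1}, U\Sigma^{-1}U\rangle = \langle \Sigma, \Sigma^{-1}U\Sigma^{-1}U\Sigma^{-1}\rangle = \langle \Sigma, KUKUK\rangle$. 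So the Hessian at $\Sigma$ evaluated on $U$ is $\langle 2S - \Sigma, KUKUK\rangle$, and since $KUKUK \succeq 0$, a sufficient-and-necessary-at-the-level-of-this-form condition for nonnegativity is controlled by how $2S-\Sigma$ pairs against rank-structured PSD matrices of the form $KUKUK$.

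The key step is then a monotonicity observation. Fix $t \geq 1$ and suppose $I(S\|\Sigma)$ is convex on $M$, i.e. $\langle 2S - \Sigma, KUKUK\rangle \geq 0$ for all $\Sigma \in M$ and all $U \in \cL$. For the scaled data $tS$ at the same point $\Sigma \in M$ (note $M$ does not depend on $S$, only on $A$ and $\cL$), the Hessian form is $\langle 2tS - \Sigma, KUKUK\rangle = \langle 2S - \Sigma, KUKUK\rangle + 2(t-1)\langle S, KUKUK\rangle$. Both summands are nonnegative: the first by the convexity hypothesis for $S$, and the second because $t \geq 1$, $S \succ 0$, and $KUKUK \succeq 0$ so $\langle S, KUKUK\rangle \geq 0$. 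Hence the Hessian of $I(tS\|\Sigma)$ is PSD on $\cL$ at every $\Sigma \in M$, which by the standard second-order characterization of convexity on the (relatively open, convex) set $M$ gives that $I(tS\|\Sigma)$ is convex on $M$, i.e. $tS \in C(M)$.

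I do not expect a serious obstacle here; the argument is essentially the observation that $S \mapsto \langle 2S-\Sigma, P\rangle$ is affine and increasing in $S$ along rays from the origin when tested against any fixed $P \succeq 0$, combined with the fact that the relevant test matrices $KUKUK$ are always PSD. The one point requiring a line of care is the passage from "Hessian PSD on $\cL$ at every point of $M$" to "convex on $M$": this is legitimate because $M = (A+\cL)\cap\S^n_+$ is an open subset of the affine space $A+\cL$ and is itself convex (intersection of a convex set with an affine space), so convexity is equivalent to nonnegativity of the restricted Hessian throughout. One should also note the scaling $t\geq 1$ is genuinely used and the statement would fail for $t<1$, consistent with the picture in Figure~\ref{fig:convexity} where the convexity region is a cone opening upward.
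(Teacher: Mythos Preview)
Your proof is correct. It takes a somewhat different route from the paper's argument, so a brief comparison is in order.

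The paper's proof works at the level of the function itself rather than the Hessian: it records the identity
\[
I(tS\|\Sigma)\;=\;tI(S\|\Sigma)+(t-1)\bigl(n-\log\det(S^{-1}\Sigma)\bigr)-n\log t,
\]
and observes that, as a function of $\Sigma$, the right-hand side is a sum of $tI(S\|\Sigma)$ (convex on $M$ by hypothesis) and $-(t-1)\log\det\Sigma$ (convex on all of $\S^n_+$ once $t\geq 1$), plus constants. No second derivatives are computed.

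Your approach instead computes the restricted Hessian explicitly as $\langle 2S-\Sigma,\,KUKUK\rangle$ and then decomposes $\langle 2tS-\Sigma,\,KUKUK\rangle$ as $\langle 2S-\Sigma,\,KUKUK\rangle + 2(t-1)\langle S,\,KUKUK\rangle$. This is a different splitting from the one induced by the paper's functional identity (which at the Hessian level would read $t\langle 2S-\Sigma,\,KUKUK\rangle + (t-1)\langle \Sigma,\,KUKUK\rangle$), but of course both are valid. What your route buys is that it makes the form $\langle 2S-\Sigma,\,KUKUK\rangle$ explicit, which is precisely the quantity behind \cite[Proposition~3.1]{ZUR} and the subsequent Remark~\ref{rem:convcone} and Proposition~\ref{prop:convnonempty}; what the paper's route buys is brevity and the avoidance of any Hessian computation. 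Your care about passing from pointwise PSD Hessian to global convexity on the relatively open convex set $M$ is appropriate and correctly handled.
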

\begin{proof}
Suppose $S\in C(M)$. If $t>0$ then
$$
I(tS\|\Sigma)\;=\;tI(S\|\Sigma)+(t-1)(n-\log\det(S^{-1}\Sigma))-n\log t.
$$
On the right hand side the only two terms that depend on $\Sigma$ are $tI(S\|\Sigma)$ and $-(t-1)\log\det(\Sigma)$. The first is convex over $M$ by assumption whenever $t>0$. The second is convex over $\S^n_+$ whenever $t\geq 1$. 
\end{proof}

\begin{rem}\label{rem:convcone}
For the unrestricted covariance model $M=\S^n_+$ the convexity cone $C(M)$ is empty. By \cite[Proposition~3.1]{ZUR}, $I(S\|\Sigma)$ is convex in and only in the region where $2S-\Sigma$ is positive definite and so there is no $S$ for which $I(S\|\Sigma)$ is convex over $\S^n_+$. So a first interesting question is to characterize models $M$ for which $C(M)$ is nonempty.
\end{rem}

Regarding the question in Remark~\ref{rem:convcone}, the following result shows that the convexity cone is always non-empty for any linear correlation model.
\begin{prop}\label{prop:convnonempty}
If $S=t I_n$ for $t\geq n/2$ then $I(S\| \Sigma)$ is convex over the set of all correlation matrices. In particular, for every linear correlation model $M$, its convexity cone is non-empty.
\end{prop}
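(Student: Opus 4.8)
### Proof proposal for Proposition~\ref{prop:convnonempty}

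The plan is to use the Hessian characterization of convexity and reduce everything to a statement about the largest eigenvalue of a correlation matrix. By the remark following Proposition~\ref{prop:convexSS}, the second directional derivative of $I(S\|\Sigma)$ in a direction $U\in\S^n_0$ equals (up to the positive factor coming from $\nabla_U\langle S,\Sigma^{-1}\rangle$ computed there) the quantity $2\langle S, KUKUK\rangle - \langle K, KUKU\rangle$, or more usefully, after substituting $S=tI_n$ and writing everything in terms of $K=\Sigma^{-1}$, something of the form $\langle K, (2tK - \Sigma)KUKU\rangle$ type expression. I would first carefully rederive the Hessian of the entropy loss restricted to the correlation model: from $\nabla I(S\|\Sigma) = K - KSK$ one gets, differentiating in the direction $U$, the bilinear form $\langle U, (-KUK) + (KUKSK + KSKUK)\rangle = -\langle U, KUK\rangle + 2\langle KUKSK, U\rangle$ (using symmetry of $S$), which for $S=tI_n$ becomes $-\langle U, KUK\rangle + 2t\langle U, KUK^2\rangle$. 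So convexity on the correlation model amounts to showing $2t\langle U, K^{1/2}(K^{1/2}UK^{1/2})K^{1/2}\cdot\text{(something)}\rangle \ge \langle U, KUK\rangle$ for all $U\in\S^n_0$ — I will need to write this cleanly, most likely as $\langle V, (2tK - I_n)V\rangle \ge 0$ where $V = K^{1/2}UK^{1/2}$ or $V=KU$, so that the condition reduces to $2tK \succeq I_n$, i.e. $t \ge \tfrac12\lambda_{\max}(\Sigma)$.

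The second step is the elementary linear-algebra fact that every $n\times n$ correlation matrix $\Sigma$ satisfies $\lambda_{\max}(\Sigma)\le n$. This follows immediately from $\lambda_{\max}(\Sigma)\le \tr(\Sigma) = n$ since $\Sigma$ is positive definite and its trace is $n$. Combining with the Hessian bound: if $t\ge n/2$ then $2t \ge n \ge \lambda_{\max}(\Sigma)$ for every correlation matrix $\Sigma$, hence $2tK \succeq \lambda_{\max}(\Sigma)\Sigma^{-1} \succeq I_n$ (using that $\lambda_{\max}(\Sigma)\Sigma^{-1}\succeq I_n$ is equivalent to $\lambda_{\max}(\Sigma)I_n \succeq \Sigma$, which is the definition of $\lambda_{\max}$), and therefore the Hessian is PSD at every point of the model. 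That proves convexity of $I(tI_n\|\Sigma)$ over the correlation model for $t\ge n/2$. The "in particular" clause is then immediate: any linear correlation model $M=(I_n+\cL)\cap\S^n_+$ is a subset of the full correlation model, and restriction of a convex function to an affine subspace is convex, so $tI_n\in C(M)$ for $t\ge n/2$, whence $C(M)\neq\emptyset$.

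The main obstacle is purely bookkeeping in the first step: getting the restricted Hessian into the clean form $\langle V,(2tK-I_n)V\rangle$ with the right change of variables, and checking that as $U$ ranges over $\S^n_0$ the transformed variable $V$ still ranges over a set large enough that nonnegativity of the form on all of $\S^n$ is what we actually need (it is, since we only need $\ge 0$, not $>0$, so enlarging the test set is harmless — we do not need strict convexity here, only convexity). I should also double-check the sign and the factor of $2$ against the gradient formula $\nabla I(S\|\Sigma)=K-KSK$ in Proposition~\ref{prop:gradients} and against the direction-derivative identities in the remark, since an off-by-constant error would change the threshold $n/2$. Once the Hessian is correctly identified, the spectral bound $\tr(\Sigma)=n$ does all the remaining work.
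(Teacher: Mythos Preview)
Your proposal is correct and follows essentially the same route as the paper: compute the second directional derivative of $I(S\|\Sigma)$, factor it so that nonnegativity reduces to $2tI_n-\Sigma\succeq 0$ (equivalently $2tK-I_n\succeq 0$), and then invoke $\lambda_{\max}(\Sigma)\le\tr(\Sigma)=n$. The only cosmetic difference is that the paper writes the Hessian as $\tr\bigl(KUKUK\,(2S-\Sigma)\bigr)$ (a trace of a product of two PSD matrices) rather than passing through the substitution $V=K^{1/2}UK^{1/2}$, and it first reduces to $t=n/2$ via Proposition~\ref{prop:convcone}; neither change affects the argument.
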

\begin{proof}
In light of Proposition~\ref{prop:convcone} it is enough to show that the statement holds for $S=\tfrac{n}{2} I_n$. We will verify the second order conditions for convexity. By Proposition~\ref{prop:gradients}, $\nabla_U I(S\|\Sigma)=\<K-KSK,U\>$ for any $U\in \S^n$. Similarly, if $V\in \S^n$ then 
$$
\nabla_V\nabla_U I(S\|\Sigma)\;=\;\<\nabla_V(K-KSK),U\>\;=\;\<-KVK+KVKSK+KSKVK,U\>.
$$
Thus, if $U=V$ and $S=\tfrac{n}{2} I_n$
$$\nabla_U\nabla_U I(S\|\Sigma)={\rm tr}(KUKUK(2S-\Sigma))={\rm tr}(KUKUK(nI_n-\Sigma)).$$
To show that this is always nonnegative for every $U\in \S^n$, we use the fact that $KUKUK$ and $nI_n-\Sigma$ are always positive semidefinite. For $nI_n-\Sigma$, this follows from the fact that the maximum eigenvalue of any correlation matrix is bounded above by ${\rm tr}(\Sigma)=n$. 
\end{proof}

\subsection{Numerical optimization of the Stein's loss}

Let $K(x)$ be the matrix equal to $W=S^{-1}$ outside the diagonal and whose diagonal is $x=(x_1,\ldots,x_n)$. By item (b) in the beginning of this section we get that the unique correlation matrix $\check \Sigma$ that optimizes the Stein's loss satisfies $\check K_{ij}=W_{ij}$ for $i\neq j$. In other words, $\check K=\check \Sigma^{-1}$ satisfies $\check K=K(x^*)$ for some $x^*\in \RR^n$. We now exploit this dimensionality reduction (from matrices to vectors) to construct a simple numerical scheme to optimize the Stein's loss, which provides a computationally efficient alternative to the algorithm developed in~\cite{brownlees2020projected}. 

\begin{prop}
The unique point $\check \Sigma$ optimizing the Stein's loss over the set of correlation functions satisfies $\check K=K(x^*)$, where $x^* \in \RR^n$ is the maximum of the concave function
\begin{equation}\label{eq:dualstain0}
    f(x)\;=\;\log\det K(x)-{\rm tr}(K(x)),\qquad x\in \RR^n.
\end{equation}
\end{prop}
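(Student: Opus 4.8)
The plan is to reduce the statement to two facts: the stationarity condition (b) for the Stein's loss, and the strict concavity of $f$. Recall that, by convexity of the Stein's loss, its minimizer $\check\Sigma$ over the correlation model exists and is unique (as noted after Theorem~\ref{th:critical}), and by item (b) it satisfies $\check K_{ij}=W_{ij}$ for all $i\neq j$. Since $K(x)$ is by definition equal to $W$ off the diagonal, this says precisely $\check K=K(x^*)$ with $x^*=(\check K_{11},\dots,\check K_{nn})$. Moreover $\check\Sigma=\check K^{-1}$ is a correlation matrix, so $\big(K(x^*)^{-1}\big)_{ii}=1$ for every $i$. It therefore suffices to show that this last system is exactly the equation $\nabla f(x^*)=0$, and that $f$ is strictly concave on its natural domain, for then $x^*$ is forced to be the unique maximizer of $f$.

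First I would compute the gradient of $f$ on $\mathcal{D}:=\{x\in\RR^n:K(x)\in\S^n_+\}$, which is open, convex, and nonempty since it contains $x^*$. The map $x\mapsto K(x)$ is affine with $\partial K(x)/\partial x_i=e_ie_i^\top$, so the directional-derivative formulas recorded after Proposition~\ref{prop:convexSS} give
$$
\frac{\partial f}{\partial x_i}(x)\;=\;\<K(x)^{-1},e_ie_i^\top\>-1\;=\;\big(K(x)^{-1}\big)_{ii}-1 .
$$
Hence $\nabla f(x)=0$ on $\mathcal{D}$ exactly when $K(x)^{-1}$ has all diagonal entries equal to $1$, i.e.\ when $K(x)^{-1}$ is a correlation matrix; in particular $\nabla f(x^*)=0$.

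Next I would verify strict concavity of $f$ on $\mathcal{D}$. The term $-\tr K(x)=-\sum_i x_i$ is affine, and $\log\det K(x)$ is the composition of the concave map $\log\det$ on $\S^n_+$ with the affine map $x\mapsto K(x)$, hence concave; it is strictly concave because, writing $D_u$ for the diagonal matrix with diagonal $u\in\RR^n$, its second directional derivative in a direction $u\neq 0$ is
$$
-\<K(x)^{-1},D_uK(x)^{-1}D_u\>\;=\;-\big\|K(x)^{-1/2}D_uK(x)^{-1/2}\big\|_F^2\;<\;0 ,
$$
since $K(x)^{-1/2}$ is invertible and $D_u\neq 0$. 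Consequently $f$ has at most one critical point on $\mathcal{D}$, and any critical point is its global maximum.

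Combining the steps, $x^*$ is a critical point of the strictly concave function $f$, hence its unique maximizer, and $\check K=K(x^*)$; this is the claim. I do not expect a genuine obstacle here. The only points that need care are: (i) working on the convex domain $\mathcal{D}$, since $\log\det K(x)$ is finite on the strictly larger set $\{\det K(x)>0\}$ but not concave there in general; and (ii) the attainment of $\max f$, which I would obtain for free from the already-established existence of $\check\Sigma$ rather than by checking coercivity of $f$ directly.
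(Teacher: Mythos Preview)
Your argument is correct and takes a genuinely different route from the paper. The paper proves the proposition via Lagrangian duality: it forms the Lagrangian for minimizing $I(\Sigma\|S)$ subject to $\Sigma_{ii}=1$, computes the dual function $h(\Lambda)=\log\det(W+\Lambda)-\tr(W+\Lambda)+{\rm const}$, and identifies $f$ with the dual objective after the substitution $K(x)=W+\Lambda$. You instead use the first-order condition (b) directly to write $\check K=K(x^*)$, then compute $\nabla f$ explicitly and verify strict concavity of $f$ on $\mathcal{D}$ to conclude that $x^*$ is the unique maximizer. Your approach is more elementary and entirely self-contained within the results already established in the paper (item (b) and the derivative formulas), while the paper's dualization explains \emph{why} $f$ arises---it is the Lagrangian dual of the Stein's-loss minimization---and implicitly relies on strong duality for the convex primal. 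Your care about the domain $\mathcal{D}$ and about obtaining attainment of $\max f$ from the existence of $\check\Sigma$ is well placed; the paper's proof leaves both of these points (and the strong-duality step) implicit.
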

\begin{proof}
The Lagrangian of the problem  of minimizing $I(\Sigma||S)$ over the set of correlation matrices is
$$
\cL(\Sigma,\Lambda)\;=\;\<\Sigma,W\>-\log\det(\Sigma W)-n+\<\Lambda,\Sigma-I_n\>,
$$
where $\Lambda$ is a diagonal $n\times n$ matrix of Lagrange multipliers. The dual function is $h(\Lambda)=\inf_{\Sigma\in \S^n_+} \cL(\Sigma,\Lambda)$ and the infimum is obtained for $\Sigma=(W+\Lambda)^{-1}$. After plugging this in we obtain
\begin{eqnarray*}
h(\Lambda)&=&\log\det(W+\Lambda)-\log\det W-{\rm tr}(\Lambda)\\
&=& \log\det(W+\Lambda)-{\rm tr}(W+\Lambda)-(\log\det W-{\rm tr}(W)).
\end{eqnarray*}
The convex dual problem is given by maximizing $h(\Lambda)$ over all diagonal matrices $\Lambda$. A simple reparametrization $K(x)=W+\Lambda$ concludes the proof.
\end{proof}

The optimization problem of maximizing (\ref{eq:dualstain0}) can be very reliably done using gradient descent, which can be complemented with Newton's method after some burn-in steps. Denoting $\Sigma(x)=(K(x))^{-1}$ we get 
\begin{equation}\label{eq:nablaKx}
\nabla f(x)\;=\;{\rm diag}(\Sigma(x))-\mathbf 1 \quad \mbox{ and } \quad
\nabla^2 f(x)\;=\;-\Sigma(x)\circ\Sigma(x),
\end{equation}
where $\circ$ denotes the Hadamard product. Our basic  implementation of this can easily handle $n$ in the hundreds. 

Finally we note that, alternatively, a simple coordinate descent scheme is possible. For any $A\in \S^n$ denote by $A_{\setminus i,i}$ the vector $\RR^{n-1}$ whose elements are $A_{ij}$ for $j\neq i$. Similarly, $A_{\setminus i}$ denotes the matrix in $\S^{n-1}$ obtained from $A$ by removing its $i$-th row and $i$-th column.
\begin{lemma}
The maximum of $f(x)$ with respect to $x_i$ with other entries of $x$ fixed is
$$
x_i^* \;=\; 1+W_{i,\setminus i} (K_{\setminus i}(x))^{-1} W_{\setminus i,i}.
$$
\end{lemma}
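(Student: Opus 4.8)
The plan is to isolate the dependence of $f(x) = \log\det K(x) - \tr(K(x))$ on the single coordinate $x_i$ and then maximize the resulting one-variable function. First I would use the Schur complement / cofactor expansion of the determinant along the $i$-th row and column. Writing $K = K(x)$ and letting $K_{\setminus i}$ denote the principal submatrix obtained by deleting row and column $i$, and $K_{\setminus i, i}$ the $i$-th column with the $i$-th entry removed, the standard block-determinant identity gives
\[
\det K \;=\; \det(K_{\setminus i})\,\bigl(x_i - K_{i,\setminus i}\,(K_{\setminus i})^{-1}\,K_{\setminus i,i}\bigr),
\]
and crucially $K_{i,\setminus i} = W_{i,\setminus i}$ and $K_{\setminus i} = K_{\setminus i}(x)$ depend only on the \emph{other} coordinates, not on $x_i$, since off the diagonal $K(x)$ agrees with the fixed matrix $W$. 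Likewise $\tr(K(x)) = x_i + \sum_{j\neq i} x_j$, so only the term $x_i$ depends on the coordinate being optimized.

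Next I would substitute these into $f$. Setting $c := K_{i,\setminus i}(K_{\setminus i})^{-1}K_{\setminus i,i} = W_{i,\setminus i}(K_{\setminus i}(x))^{-1}W_{\setminus i,i}$ and absorbing all terms not involving $x_i$ into a constant, $f$ as a function of $x_i$ alone becomes
\[
\phi(x_i) \;=\; \log(x_i - c) \;-\; x_i \;+\; \text{const},
\]
valid on the interval $x_i > c$ where $K(x)$ can be positive definite (this is exactly the range in which the relevant Schur complement is positive). Since $\phi$ is strictly concave there (consistent with $\nabla^2 f = -\Sigma\circ\Sigma \prec 0$ from \eqref{eq:nablaKx}), its unique maximizer is found by setting $\phi'(x_i) = \frac{1}{x_i - c} - 1 = 0$, i.e. $x_i^* = 1 + c = 1 + W_{i,\setminus i}(K_{\setminus i}(x))^{-1}W_{\setminus i,i}$, which is the claimed formula.

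The only real subtlety — and the step I would be most careful about — is bookkeeping: making sure that $K_{i,\setminus i}$ genuinely equals $W_{i,\setminus i}$ (it does, because $K(x)$ is defined to coincide with $W$ off the diagonal) and that $(K_{\setminus i}(x))^{-1}$ is well-defined and positive definite whenever the current iterate $x$ keeps $K(x) \in \S^n_+$, so that the optimization over $x_i > c$ is nonempty and the interior critical point is indeed the constrained maximum. Everything else is the routine one-variable calculus above; there is no global-versus-local issue because $f$ is concave by \eqref{eq:nablaKx}, so the coordinatewise maximizer is exactly the stated point.
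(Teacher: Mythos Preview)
Your proof is correct and follows essentially the same route as the paper: both arguments hinge on the Schur complement identity $\det K(x)=\det(K_{\setminus i})\bigl(x_i-W_{i,\setminus i}(K_{\setminus i})^{-1}W_{\setminus i,i}\bigr)$ and the fact that nothing else in $f$ depends on $x_i$ except the linear term $-x_i$ from the trace. The only cosmetic difference is that the paper first invokes the gradient formula \eqref{eq:nablaKx} to rewrite the critical condition as $\Sigma_{ii}(x)=1$ (i.e.\ $\det K_{\setminus i}=\det K$) before applying the Schur complement, whereas you substitute the Schur complement directly into $f$ and differentiate the resulting one-variable function $\phi(x_i)=\log(x_i-c)-x_i$; the computations are otherwise identical.
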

\begin{proof}
By (\ref{eq:nablaKx}), $\frac{\partial f}{\partial x_i}(x)=0$ if and only if $\Sigma_{ii}(x)=1$. Since $\Sigma_{ii}(x)=\tfrac{\det K_{\setminus i}(x)}{\det K(x)}$, equivalently $\det K_{\setminus i}(x)=\det K(x)$. By a standard Schur complement argument, 
$$\det K(x)=(K_{ii}(x)-K_{i,\setminus i}(x)(K_{\setminus i})^{-1}K_{\setminus i,i})\det K_{\setminus i}(x).$$
Since $K_{ii}(x)=x_i$ and $K_{i,\setminus i}(x)=W_{i,\setminus i}$ the corresponding partial derivative vanishes if $x_i-W_{i,\setminus i} (K_{\setminus i}(x))^{-1} W_{\setminus i,i}=1$.
\end{proof}
Note that since $f$ is a smooth and strictly concave, it is clear that this coordinate descent procedure converges.

\section{Equicorrelation model}\label{sec:equicorr}

An interesting example of a tractable linear correlation models is the equicorrelation model, which consists of positive definite matrices for which $\Sigma_{ij}=\rho$ for all $i\neq j$. In other words $\Sigma=(1-\rho)I_n+\rho\mathbf 1\mathbf 1^T$, where $\mathbf 1\in \RR^n$ is the vector of ones and so $\cL={\rm span}\{\mathbf 1\mathbf 1^T-I_n\}$ in (\ref{eq:corrmodel}). We denote this model by $E_n$. Note that $E_2$ is precisely the bivariate correlation model studied in Section~\ref{sec:bivariate}.

The common off-diagonal entry $\rho$ for every matrix in $E_n$ necessarily satisfies $-\tfrac{1}{n-1}<\rho<1$ for $\Sigma=(1-\rho)I_n+\rho\mathbf 1\mathbf 1^T$ to be positive definite. This is a consequence of the following well-known result.

\begin{lemma}\label{lem:basicab}
If $A=(x-y)I_n+y\mathbf 1 \mathbf 1^T$ then $\det A=(x-y)^{n-1}(x+y(n-1))$ and $A\in \S^n_+$ if and only if 
$$
x>0\qquad\mbox{and}\qquad -\frac{x}{n-1}<y<x.
$$
Furthermore, if $\det(A)\neq 0$ then 
$$
A^{-1}\;=\;\frac{1}{x-y}\left(I_n-\frac{y}{x+(n-1)y}\mathbf 1\mathbf 1^T\right).
$$
\end{lemma}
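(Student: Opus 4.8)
The plan is to diagonalize $A$ by exploiting its invariance under permutations of the coordinates. First I would note that $\mathbf 1$ is an eigenvector: since $\mathbf 1^T\mathbf 1=n$, we get $A\mathbf 1=(x-y)\mathbf 1+y(\mathbf 1^T\mathbf 1)\mathbf 1=(x+(n-1)y)\mathbf 1$. Next, any vector $v$ in the hyperplane $\mathbf 1^\perp$ satisfies $\mathbf 1\mathbf 1^T v=0$, hence $Av=(x-y)v$. Thus the spectrum of $A$ consists of the eigenvalue $x+(n-1)y$ with multiplicity one and the eigenvalue $x-y$ with multiplicity $n-1$, and multiplicativity of the determinant gives $\det A=(x-y)^{n-1}(x+(n-1)y)$.

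For the positive definiteness claim, since $A$ is symmetric it belongs to $\S^n_+$ if and only if all its eigenvalues are strictly positive, i.e. $x-y>0$ and $x+(n-1)y>0$, which is exactly $-\tfrac{x}{n-1}<y<x$. I would then remark that these two inequalities already force $x>0$: adding the rearranged forms (or simply using $-\tfrac{x}{n-1}<x$, which gives $nx>0$ since $n\ge 2$ in the equicorrelation setting) yields $x>0$, so listing $x>0$ in the statement is merely for convenience.

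For the inverse, the cleanest route is to verify the stated formula directly by multiplying it out, using $\mathbf 1\mathbf 1^T\mathbf 1\mathbf 1^T=n\,\mathbf 1\mathbf 1^T$ and collecting the coefficients of $I_n$ and of $\mathbf 1\mathbf 1^T$; alternatively one invokes the Sherman--Morrison identity $(cI_n+y\mathbf 1\mathbf 1^T)^{-1}=\tfrac1c I_n-\tfrac{y}{c(c+ny)}\mathbf 1\mathbf 1^T$ with $c=x-y$ and substitutes $c+ny=x+(n-1)y$. The hypothesis $\det A\neq 0$ is precisely $x\neq y$ and $x+(n-1)y\neq 0$, which is exactly what keeps both denominators in the formula well defined.

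The whole argument is routine linear algebra, so I do not anticipate a genuine obstacle; the only mild point of care is the degenerate case $n=1$, where the constraint $-x/(n-1)$ is vacuous, but this does not occur here since the equicorrelation model $E_n$ is always taken with $n\ge 2$.
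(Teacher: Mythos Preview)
Your argument is correct and is exactly the standard one: the spectral decomposition into the line spanned by $\mathbf 1$ and its orthogonal complement gives the eigenvalues, hence the determinant and the positive-definiteness characterization, and the inverse follows by direct multiplication or Sherman--Morrison. The paper itself does not supply a proof of this lemma; it is stated as a ``well-known result'' and left unproved, so there is no approach to compare against. Your treatment is complete and would serve well as the missing justification.
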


In this way, if $\Sigma=(1-\rho)I_n+\rho\mathbf 1\mathbf 1^T$ then from Lemma~\ref{lem:basicab} we obtain that  $K=(c-d)I_n+d\mathbf 1\mathbf 1^T$, with
\begin{equation}\label{eq:cd}
c=\frac{1}{1-\rho}\frac{1+(n-2)\rho}{1+(n-1)\rho},\qquad d=-\frac{1}{1-\rho}\frac{\rho}{1+(n-1)\rho}.
\end{equation}

\subsection{Algebraic degrees}

In this section we consider the problem of maximizing the entropy loss, the Stein's loss, and the symmetrized Stein's loss for the equicorrelation model $E_n$. We start with a useful result that exploits the symmetry of the model to simplify our calculations. Define the symmetrized versions of $S$ and $W=S^{-1}$ as
$$
\overline S\;=\;\frac{1}{n!}\sum_{P\in \cS_n} P S P^T,\qquad \overline W\;=\;\frac{1}{n!}\sum_{P\in \cS_n} P W P^T,
$$
where $\cS_n$ is the set of all $n\times n$ permutation matrices. The crucial observation is that the equicorrelation model is invariant under the permutation group action.
\begin{lemma}\label{lem:Sbar}
For every $S\in \S^n$ and $\Sigma\in E_n$ it holds that
$$
\<S,\Sigma^{-1}\>=\<\overline S,\Sigma^{-1}\>\qquad\mbox{and}\qquad \<\Sigma,W\>=\<\Sigma,\overline W\>.
$$
\end{lemma}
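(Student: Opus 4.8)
The plan is to exploit the permutation symmetry of the equicorrelation model. The key observation is that every $\Sigma=(1-\rho)I_n+\rho\mathbf 1\mathbf 1^T\in E_n$ is fixed by conjugation with an arbitrary permutation matrix: since $PI_nP^T=I_n$ and $P\mathbf 1=\mathbf 1$ for every $P\in\cS_n$, we get $P\Sigma P^T=\Sigma$, and taking inverses (recall that permutation matrices are orthogonal, so $P^{-1}=P^T$) also $P\Sigma^{-1}P^T=\Sigma^{-1}$. Everything else is a one-line trace manipulation.

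First I would use the cyclic invariance of the trace together with $\Sigma^{-1}=P^T\Sigma^{-1}P$ to rewrite, for each fixed $P\in\cS_n$,
$$
\<S,\Sigma^{-1}\>=\tr(S\Sigma^{-1})=\tr\!\big(SP^T\Sigma^{-1}P\big)=\tr\!\big((PSP^T)\Sigma^{-1}\big)=\<PSP^T,\Sigma^{-1}\>.
$$
Averaging this identity over all $n!$ permutation matrices and pulling the average inside the bilinear form $\<\cdot,\cdot\>$ gives
$$
\<S,\Sigma^{-1}\>=\frac{1}{n!}\sum_{P\in\cS_n}\<PSP^T,\Sigma^{-1}\>=\<\overline S,\Sigma^{-1}\>.
$$

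Second, for the identity $\<\Sigma,W\>=\<\Sigma,\overline W\>$ I would argue symmetrically, this time invoking $\Sigma=P^T\Sigma P$: for each $P\in\cS_n$,
$$
\<\Sigma,W\>=\tr(\Sigma W)=\tr\!\big(P^T\Sigma P\,W\big)=\tr\!\big(\Sigma\,(PWP^T)\big)=\<\Sigma,PWP^T\>,
$$
and averaging over $P\in\cS_n$ yields $\<\Sigma,W\>=\<\Sigma,\overline W\>$.

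There is no real obstacle here; the only point that deserves emphasis is the invariance $P\Sigma P^T=\Sigma$, which is exactly where the special structure of $E_n$ enters — the argument would fail for a general linear correlation model whose defining subspace $\cL$ is not $\cS_n$-invariant. (Equivalently, one could phrase the computation as the statement that averaging over $\cS_n$ is the $\<\cdot,\cdot\>$-orthogonal projection onto the $\cS_n$-fixed symmetric matrices and that this projection fixes both $\Sigma$ and $\Sigma^{-1}$, but the direct trace manipulation above is the shortest route.)
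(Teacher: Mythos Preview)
Your proof is correct and follows essentially the same argument as the paper: both exploit the permutation invariance $P\Sigma P^T=\Sigma$ of equicorrelation matrices, rewrite $\<S,\Sigma^{-1}\>=\<PSP^T,\Sigma^{-1}\>$ via the trace, and average over $\cS_n$ to obtain $\overline S$. The second identity is handled identically in both.
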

\begin{proof}
If $\Sigma\in E_n$ then $P\Sigma P^T=\Sigma$ and $P^T\Sigma^{-1}P=\Sigma^{-1}$ for every $P\in \cS_n$, so that
$$
\<S,\Sigma^{-1}\>=\frac{1}{n!}\sum_{P\in \cS_n}\<S,P^T\Sigma^{-1}P\>=\frac{1}{n!}\sum_{P\in \cS_n}\<PSP^T,\Sigma^{-1}\>=\<\overline S,\Sigma^{-1}\>.
$$
The equality $\<\Sigma,W\>=\<\Sigma,\overline W\>$ is argued in the same way.
\end{proof}
Lemma~\ref{lem:Sbar} immediately implies that optimizing the entropy loss $I(S\|\Sigma)$ is equivalent to optimizing the entropy loss $I(\overline S\|\Sigma)$ (the difference between the two does not depend on $\Sigma$). Similarly, optimizing $I(\Sigma\|S)$ is equivalent to optimizing $I(\Sigma\|\overline W^{-1})$. Moreover, 
$$L(\Sigma,S)\;=\;\tfrac{1}{2}(\<\overline S,\Sigma^{-1}\>+\<\Sigma,\overline W\>) - n\qquad\mbox{for all }S\in \S^n_+\mbox{ and } \Sigma\in E_n.$$

Let $\bar c, \bar d$ denote the diagonal and the off-diagonal entry of $\overline W$ so that $\overline W=(\bar c-\bar d)I_n+\overline d\mathbf 1\mathbf 1^T$. Recall that $c,d$ are the corresponding entries of $K$.
\begin{thm}\label{th:dMLequi}
The dual ML degree in the equicorrelation model $E_n$ is 2 for any $n\geq 2$. If $\bar d=0$ then the unique critical point is $\check\rho =0$. If $\bar d\neq 0$, there are two critical points given by 
\begin{equation}\label{eq:dualSigequicorr}
 \check{\rho}\; =\; \frac{1+(n-2)\bar d \pm \sqrt{(n \bar d+1)^2-4\bar d}}{2(n-1)\bar d}.
\end{equation}
The unique positive definite solution $\check \Sigma$ corresponds to the solution that takes the negative sign in (\ref{eq:dualSigequicorr}). Moreover, $\check K=\check \Sigma^{-1}$ is equal to $\bar d$ outside of the diagonal and all its diagonal entries are equal and given by taking the positive sign in
\begin{equation}\label{eq:dualKequicorr}
\frac{1}{2}\left(1-(n-2)\bar d\pm \sqrt{(n\bar d+1)^2-4\bar d}\right).
\end{equation}
\end{thm}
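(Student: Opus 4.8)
The plan is to use the symmetry reduction of Lemma~\ref{lem:Sbar} to collapse the critical equations of Theorem~\ref{th:critical} down to a single quadratic in $\rho$. Take $A=I_n$ and $\cL=\mathrm{span}\{\mathbf 1\mathbf 1^T-I_n\}$, so that a symmetric matrix $M$ lies in $\cL^\perp$ exactly when $\sum_{i\neq j}M_{ij}=0$. By Lemma~\ref{lem:Sbar}, minimizing $I(\Sigma\|S)$ over $E_n$ is equivalent to minimizing $I(\Sigma\|\overline W^{-1})$, so the Stein's-loss critical condition \eqref{eq:criticalS} becomes $K-\overline W\in\cL^\perp$ with $\overline W=(\bar c-\bar d)I_n+\bar d\,\mathbf 1\mathbf 1^T$. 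Writing $K=(c-d)I_n+d\,\mathbf 1\mathbf 1^T$ as in \eqref{eq:cd}, the off-diagonal entries of $K-\overline W$ are all equal to $d-\bar d$, and summing the $n(n-1)$ of them shows the critical condition is equivalent to the scalar equation $d=\bar d$.

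Next I would substitute $d=-\tfrac{1}{1-\rho}\cdot\tfrac{\rho}{1+(n-1)\rho}$ from \eqref{eq:cd} and clear denominators, using $(1-\rho)\big(1+(n-1)\rho\big)=1+(n-2)\rho-(n-1)\rho^2$. The equation $d=\bar d$ becomes
\[
(n-1)\bar d\,\rho^2-\big(1+(n-2)\bar d\big)\,\rho-\bar d\;=\;0.
\]
If $\bar d=0$ this degenerates to $\rho=0$; if $\bar d\neq 0$ it is a genuine quadratic, hence has exactly two complex roots, and $\bar d\neq 0$ for generic $S$, so the dual ML degree equals $2$ for all $n\geq 2$. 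The quadratic formula, together with the identity $\big(1+(n-2)\bar d\big)^2+4(n-1)\bar d^2=(n\bar d+1)^2-4\bar d$, produces exactly \eqref{eq:dualSigequicorr}. Moreover, viewed as a quadratic in $\bar d$ the expression $(n\bar d+1)^2-4\bar d$ has discriminant $-16(n-1)<0$, so it is always strictly positive and both roots are real.

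To identify the positive definite root, recall that the Stein's loss is convex and coercive on $E_n$ (it blows up as $\rho\to 1$ or $\rho\to-\tfrac1{n-1}$), so it has a unique minimizer there, which by the sufficiency part of Theorem~\ref{th:critical} is its unique positive definite critical point; hence exactly one root lies in $\big(-\tfrac1{n-1},1\big)$. I would pin it down by instead handling $\check K$: the critical condition forces $\check K$ to have constant off-diagonal entry $\bar d$, and by Lemma~\ref{lem:basicab} its diagonal entry $c$ must satisfy $(c-\bar d)\big(c+(n-1)\bar d\big)=c+(n-2)\bar d$ in order that $\check K^{-1}$ have unit diagonal. This is again a quadratic in $c$ whose discriminant simplifies to the same $(n\bar d+1)^2-4\bar d$, yielding \eqref{eq:dualKequicorr}; checking the three inequalities $c>0$, $c-\bar d>0$, $c+(n-1)\bar d>0$ from Lemma~\ref{lem:basicab} for the positive-sign root (each reducing, after squaring under the relevant sign assumption on $\bar d$, to $4(n-1)\bar d>0$, $4(n-1)\bar d>0$, or $-4\bar d>0$) shows it is the one giving a positive definite $\check K$. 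Finally, substituting this $c$ into $\check\rho=-\bar d/\big(c+(n-2)\bar d\big)$ (the off-diagonal entry of $\check K^{-1}$, using the constraint above) and rationalizing with $\big(1+(n-2)\bar d\big)^2-\big((n\bar d+1)^2-4\bar d\big)=-4(n-1)\bar d^2$ recovers precisely the negative-sign root in \eqref{eq:dualSigequicorr}.

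I expect the main obstacle to be this last paragraph: deriving the two quadratics and their common discriminant is routine, but correctly singling out the signs in \eqref{eq:dualSigequicorr} and \eqref{eq:dualKequicorr} requires a careful, if elementary, case analysis on the sign of $\bar d$ governed by the square-root comparisons above.
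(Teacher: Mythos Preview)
Your derivation of the critical equation and the quadratic in $\rho$ matches the paper exactly, including the case $\bar d=0$ and the dual ML degree computation. You also add a nice extra: the observation that the discriminant $(n\bar d+1)^2-4\bar d$, viewed as a quadratic in $\bar d$, has discriminant $-16(n-1)<0$ and hence is always strictly positive, so both roots are real for every $\bar d\neq 0$. The paper leaves this implicit.

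Where you genuinely differ is in singling out the positive definite root and in obtaining \eqref{eq:dualKequicorr}. The paper works entirely in $\rho$: it rewrites the critical equation as $-\rho=\bar d(1-\rho)\big(1+(n-1)\rho\big)$ and reads off the location of the two intersections of the line $-\rho$ with this parabola (roots at $\rho=1$ and $\rho=-\tfrac{1}{n-1}$), doing a quick sign-of-$\bar d$ case split to see which intersection lands in $\big(-\tfrac{1}{n-1},1\big)$; the diagonal-entry formula \eqref{eq:dualKequicorr} is then obtained a posteriori by plugging $\check\rho$ into \eqref{eq:cd}. You instead go through $\check K$: you set up an independent quadratic for the diagonal entry $c$ from the unit-diagonal constraint $(c-\bar d)\big(c+(n-1)\bar d\big)=c+(n-2)\bar d$, observe it has the \emph{same} discriminant, check positive definiteness of the positive-sign root via the three inequalities of Lemma~\ref{lem:basicab}, and then recover $\check\rho=-\bar d/\big(c+(n-2)\bar d\big)$ by rationalizing. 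Your route is more algebraic and makes \eqref{eq:dualKequicorr} a direct byproduct rather than a back-substitution; the paper's line--parabola picture is shorter and more geometric. One small caution: your parenthetical that the three inequalities reduce ``to $4(n-1)\bar d>0$, $4(n-1)\bar d>0$, or $-4\bar d>0$'' is not quite right for the $c>0$ case (the difference of squares there is $4\bar d\big((n-1)\bar d+n-2\big)$, not $4(n-1)\bar d$), though the conclusion still holds once you track the sign assumption carefully.
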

\begin{proof}
For the Stein's loss the condition that the critical point $\check K$ must satisfy is $\<\check K-W,\mathbf 1\mathbf 1^T-I_n\>=0$, or in other words, the sums of all the off-diagonal elements of both $\check K$ and $W$ are equal. Using (\ref{eq:cd}) the critical equation becomes
\begin{equation}\label{eq:quadratic}
-\frac{1}{1-\rho}\frac{\rho}{1+(n-1)\rho}\;=\;\bar d.
\end{equation}
If $\bar d=0$ there is a unique solution given by $\rho=0$. If $\bar d\neq 0$, this leads to a quadratic equation in $\rho$: 
$$
    (n-1)\bar d \rho^2 -( (n-2)\bar d + 1) \rho - \bar d = 0
$$
with roots satisfying (\ref{eq:dualSigequicorr}). We can rewrite \eqref{eq:quadratic} as $-\rho = \bar d(1-\rho)(1+(n-1)\rho)$. Geometrically, this corresponds to intersecting the line $-\rho$ with the parabola $\bar d(1-\rho)(1+(n-1)\rho)$. If $\bar d>0$, the intersection point with larger $\rho$ coordinate has $\rho>1$ and thus does not lead to a positive definite solution. The other solution clearly satisfies $-\tfrac{1}{n-1}<\rho<0$. This point is obtained by choosing the negative sign in \eqref{eq:dualSigequicorr}. Similarly, if $\bar d<0$ the intersection point with the larger $\rho$ coordinate satisfies $0<\rho<1$  and the other intersection point satisfies $\rho<-\tfrac{1}{n-1}$. The former is given by choosing the negative sign in \eqref{eq:dualSigequicorr} and the latter by the positive sign. We conclude that in both cases we should take the negative sign in \eqref{eq:dualSigequicorr} to guarantee $-\tfrac{1}{n-1}<\rho<1$. The last part of the statement follows from the formula for the off-diagonal entry of $\Sigma^{-1}$ by taking $\rho=\check \rho$.
\end{proof}

For the entropy loss, we have no guarantee that there is a unique positive definite solution and the situation becomes slightly more complicated, but also more interesting. 

Let $a,b$ be the diagonal and the off-diagonal entry of $\overline S$, so that  $\overline S=(a-b)I_n+b\mathbf 1\mathbf 1^T$. Using Lemma~\ref{lem:Sbar}, to optimize $I(S\|\Sigma)$ over the equicorrelation model $E_n$, we can equivalently optimize
\begin{equation}\label{eq:entrequi}
I(\overline S\|\Sigma)\;=\;(n-1)\log\left(\tfrac{1-\rho}{a-b}\right)+\log\left(\tfrac{1+\rho(n-1)}{a+b(n-1)}\right)+n(ac+bd(n-1)-1),
\end{equation}
where $c,d$ are the entries of $K=\Sigma^{-1}$ as given in (\ref{eq:cd}). Recall that  $\bar c,\bar d$ denote the entries of $\overline W=(\bar c-\bar d)I_n+\bar d\mathbf 1\mathbf 1^T$. The next result provides both the ML degree and the SSL degree we had promised at the end of Section \ref{sec:bivariate}.

\begin{thm}\label{th:MLESigequicorr}
The ML degree of the equicorrelation model $E_n$ is 3 for any $n\geq 2$. The critical points are given by solutions of the cubic equation $f_n(\rho)=0$, where
\begin{equation}\label{eq:MLESigequicorr}
   f_n(\rho)\;:=\; (n-1)\rho^3 + ((n-2)(a-1)-(n-1)b)\rho^2 + (2a-1)\rho - b.
\end{equation}
The SSL degree of $E_n$ is 4 for any $n\geq 2$. The critical points satisfy the quartic
\begin{eqnarray*}
    (n-1)^2 \bar{d}\rho^4 -2 (n^2-3n+2)\bar{d}\rho^3 +  ((n^2-6n+6)\bar{d} +(n-2)a-(n-1)b)\rho^2 \\ + 2(a+(n-2)\bar{d})\rho + \bar{d} - b = 0.
\end{eqnarray*}
\end{thm}
\begin{proof}If $K=(c-d)I_n+d\mathbf 1\mathbf 1^T$ and $\overline S=(a-b)I_n+b\mathbf 1\mathbf 1^T$ then the matrix $K\overline{S} K$ has the  form $(x-y)I_n+y\mathbf 1\mathbf 1^T$ with 
$$
y=a d(2c+(n-2)d)+b\left((c+(n-2)d)^2+(n-1)d^2\right).
$$
The critical point of $I(\overline S\| \Sigma)$ must satisfy that $y=d$. Using \eqref{eq:cd}, this translates to a cubic equation (\ref{eq:MLESigequicorr}). To compute the SSL degree, note that, by Lemma~\ref{lem:Sbar}, $L(\Sigma,S)=\tfrac{1}{2}(\<\overline S,\Sigma^{-1}\>+\<\Sigma,\overline W\>)-n$. Since $\bar c, \bar d$ are the entries of $\overline W$, we have
$$
L(\Sigma,S)=\frac{n}{2}\left(\bar c+a c+(n-1)(\bar d \rho+b d)\right)-n.
$$
The equation $\frac{\partial}{\partial \rho}L(\Sigma,S)=0$ is equivalent to the vanishing of the given quartic polynomial in $\rho$.
\end{proof}

\subsection{Positive definite critical points}

Recall that the ML degree is three and thus the cubic $f_n(\rho)=0$ in \eqref{eq:MLESigequicorr} always has a real root. Note that  $f_n(-1)=-\tfrac{n}{(n-1)^2}(a+(n-1)b)$ and $f_n(1)=n(a-b)$. Since $\overline S\in \S^n_+$, by Lemma~\ref{lem:basicab} we have that $f_n(1)>0$ and $f_n(-1)<0$. This means that in fact $f_n$ always has a root in $(-\tfrac{1}{n-1},1)$ which corresponds to a positive definite solution. It has precisely one real root if its discriminant
\begin{eqnarray}\label{eq:bigdiscriminant}
    \Delta_{f,n}(b,a) &=& -4(n-1)^3b^4 + 12(n-2)(a-1)b^3 \nonumber \\ & &-4(n-1)((3n^2-13n+13)a^2-2a(3n^2-8n+8)+3n^2-n+1)b^2 \nonumber \\
& & +4(n-2)(a-1)((n^2-6n+6)a^2-(2n^2-n+1)a+n^2)b \nonumber \\
& & +(2a-1)^2((n-2)^2a^2-2n^2a+n^2) 
\end{eqnarray}
is negative. Conversely, if $\Delta_{f,n} > 0$ there are three real roots and if $\Delta_{f,n} = 0$ one of them is a double root. Note that if $n=2$ then $\Delta_{f,n}$ is precisely the discriminant $\Delta_{f}$ in (\ref{fig:realsolutions}) and the underlying geometry was carefully explained in Section~\ref{sec:bivariate}. To get a complete picture for how multiple roots may arise in this optimization problem, we follow the same strategy as for the bivariate model. The main difference is that the region is no longer symmetric about the $a$-axis since the left boundary $a = -(n-1)b$ moves toward it; see Figure~\ref{fig:largen}.

\begin{figure}[ht]
 \centering 
 \includegraphics[width=8cm]{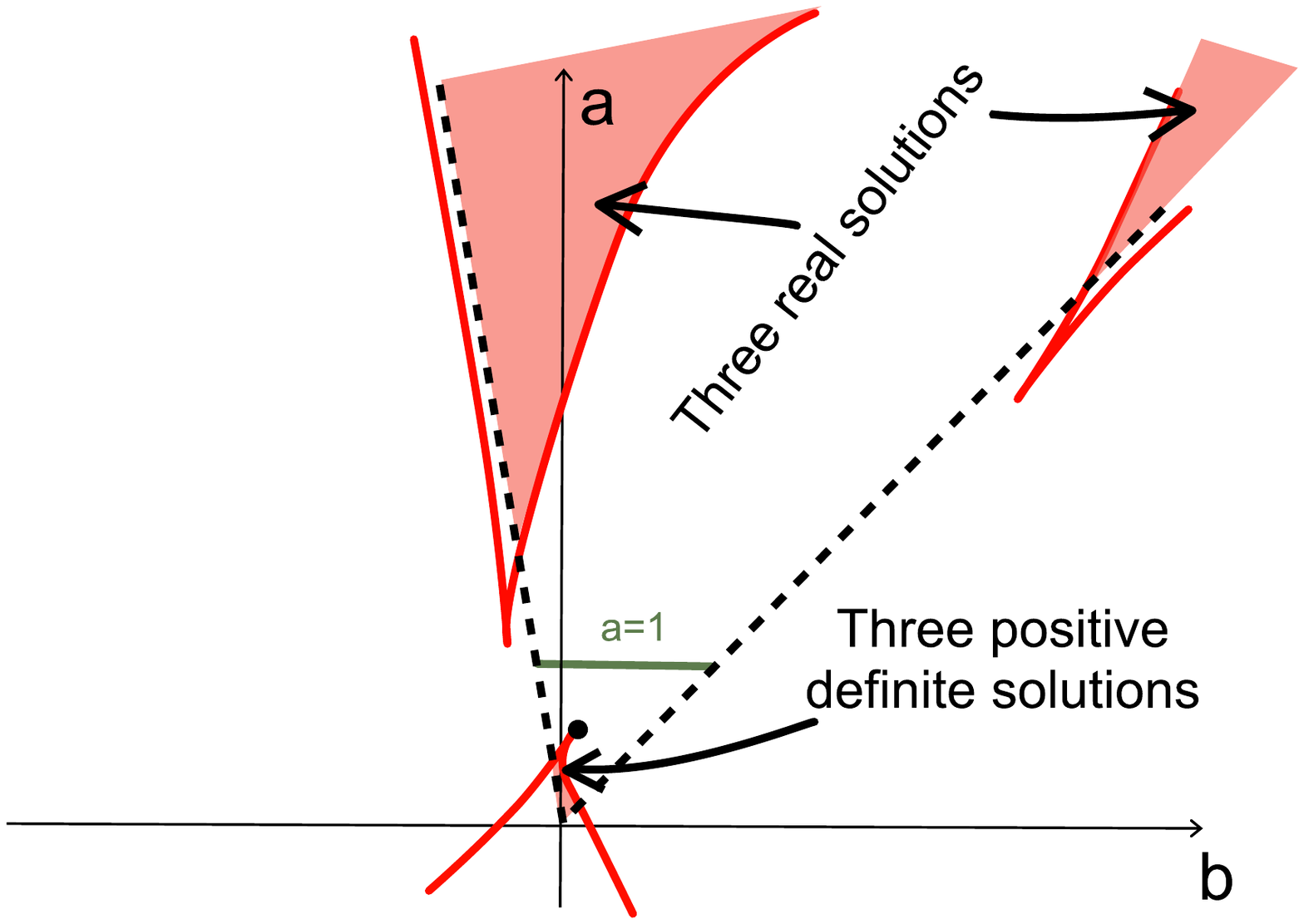} 
\caption{Likelihood geometry for $n>2$. The set of all points in the positive definite cone such that $a=1$ is marked in green. }\label{fig:largen} 
\end{figure}

One stark difference from the $n=2$ case that can be appreciated in Figure~\ref{fig:largen} is that for $n>2$ the discriminant $\Delta_{f,n}(b,a)$ crosses the $a$-axis three times, not only at the point $(0,\frac12)$ but also when $b = \frac{n(n \pm 2\sqrt{n-1})}{(n-2)^2}$. Note however that both of these extra points converge to 1 as $n\to \infty$.

The discriminant intersects the boundary of the positive definite cone in four points. On the $a=b$ side these are given by $b = \pm 2\sqrt{n(n-1)}+2n-1$
(note that as $n \to \infty$, one goes to 0 and the other goes to $\infty$)
while on the $a=-(n-1)b$ side these are:
$b = \frac{1}{(\sqrt{n}\pm 1)^2}.$
This
means that as $n\to \infty$ these points converge to $(b,a) = (0, 1)$. One of the consequences is that the red chamber in the bottom shrinks in area towards zero.

In summary, we observe the following pattern for every $n\geq 2$. Inside the positive definite cone there are four chambers induced by the discriminant locus (the non-central chambers are marked in red in Figure~\ref{fig:largen}). The large central chamber leads to only one real critical point of the entropy loss. On the other hand, if $(b,a)$ lies in one of the other chambers, then there are 3 real critical points in the entropy loss. However, in the two side red chambers there are two extra critical points outside the positive definite cone (that is, there is a unique positive definite solution to the likelihood equations) and only the bottom chamber (which shrinks as $n$ grows) induces all three positive definite solutions.

\begin{figure}[ht]
 \centering 
 \includegraphics[scale=.7]{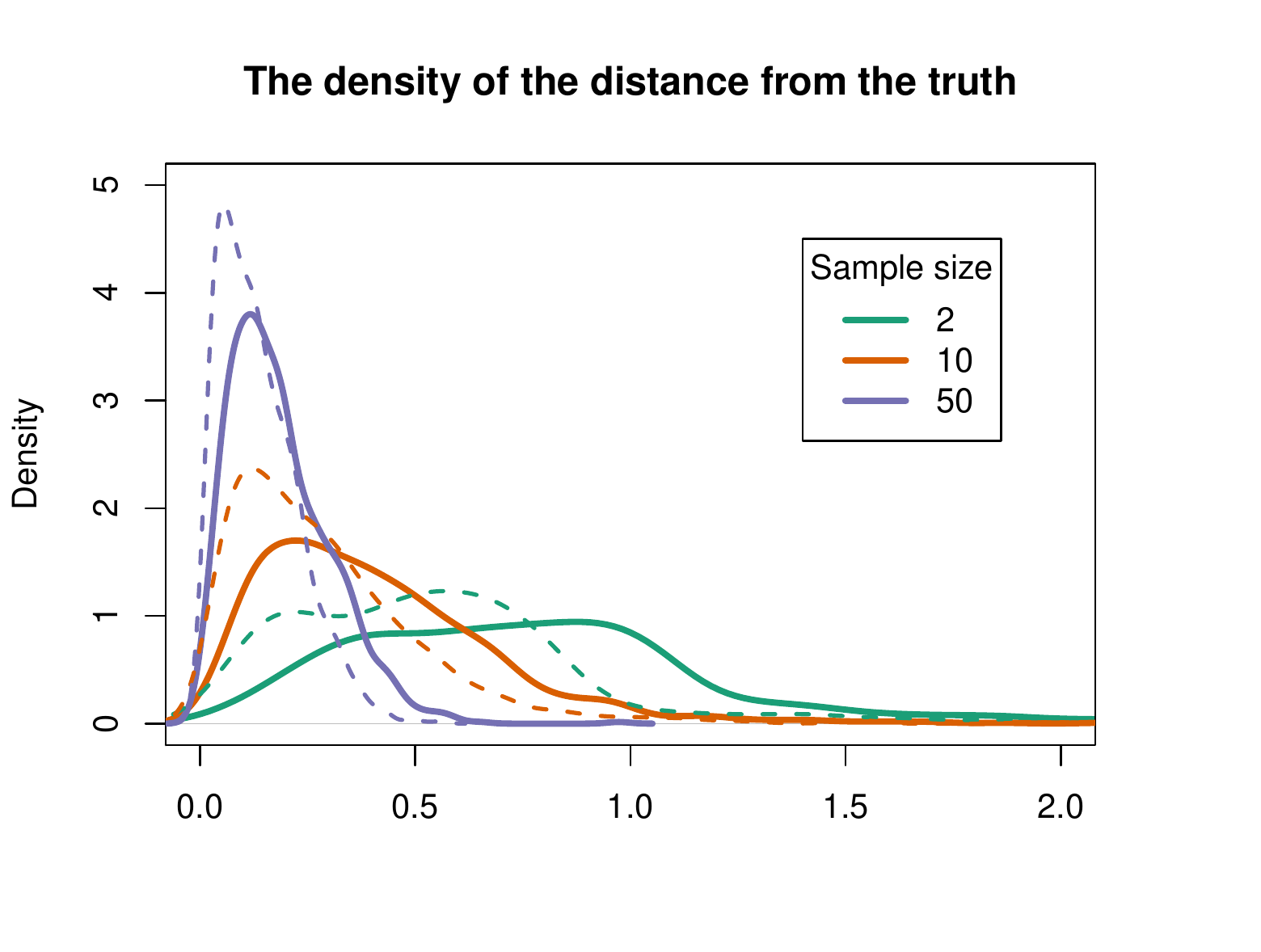}
\caption{Density plots for the Euclidean distance of $(b,a)$ from $(\rho^*,1)$ where $\rho^*=0.5$ for $n=2$ (solid lines) and $n=10$ (dashed lines). In both cases we consider sample sizes, $2,10, 50.$ }\label{fig:dist}
\end{figure}
\begin{rem}\label{rem:probrelev}
From the mathematical perspective we would like to understand the geometry  of the entropy  loss for every pair $b,a$ for which the entropy loss is defined. However, it is important to note that statisticians will typically care only about a small neighborhood of $(b^*,a^*)=(\rho^*,1)$, where $\rho^*$ is the true data generating correlation. To illustrate this point, we show in Figure~\ref{fig:dist} the distribution of the  distance of $(b,a)$ from $(\rho^*,1)$ assuming that the data follow the Gaussian distribution with covariance in $E_n$ and that $\rho^*=0.5$. Even for small $n$ and relatively small sample sizes this distance is small with a very high probability. A similar behavior will be observed for any sub-Gaussian distribution. 
\end{rem}

Remark~\ref{rem:probrelev} has an interesting consequence for our analysis. Although the entropy loss can have three positive definite critical points, in statistical practice this will happen rarely even for small sample sizes; see Figure~\ref{fig:prob}.

\begin{figure}[ht]
 \centering 
 \includegraphics[scale=.5]{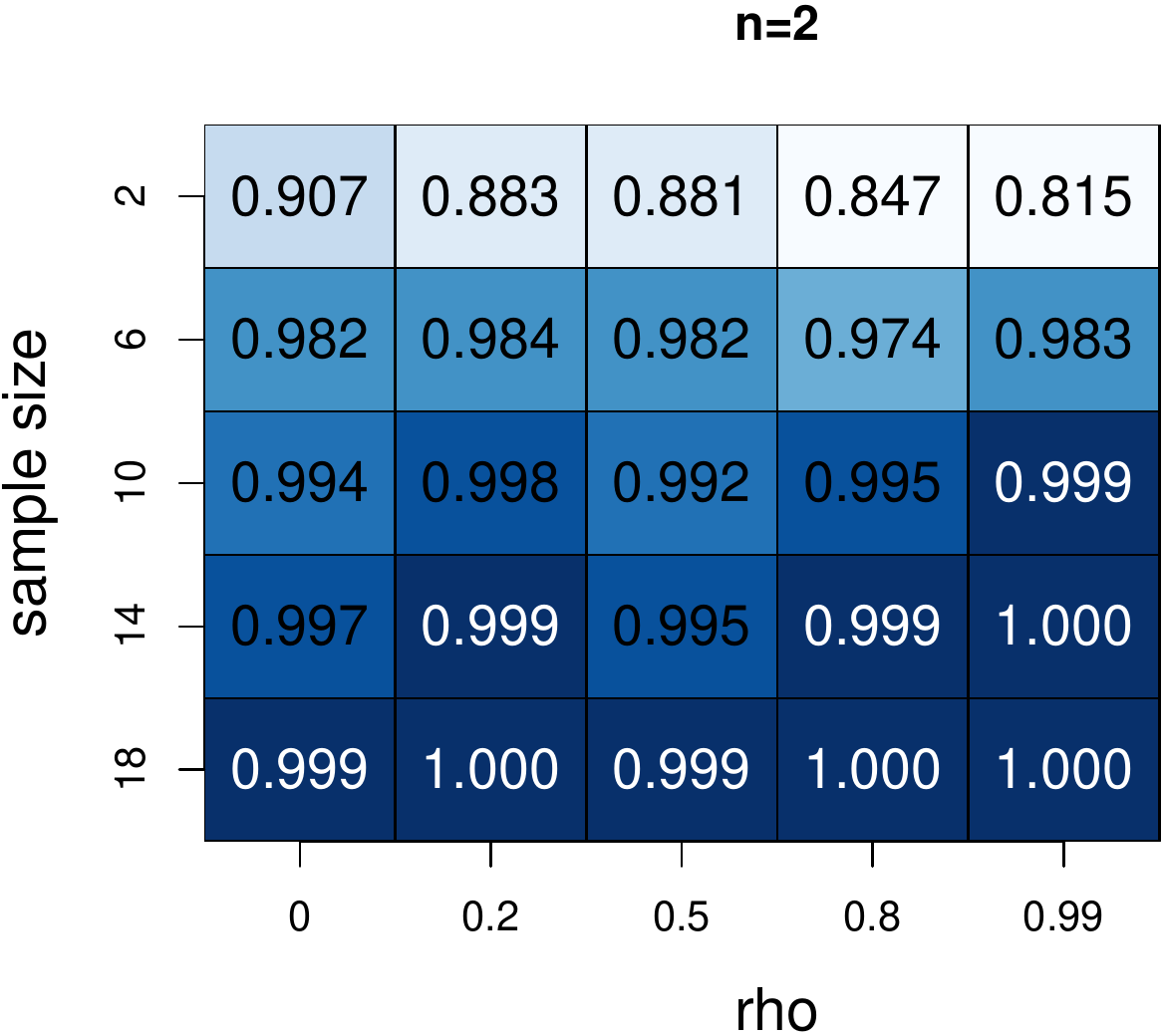}\quad\includegraphics[scale=.5]{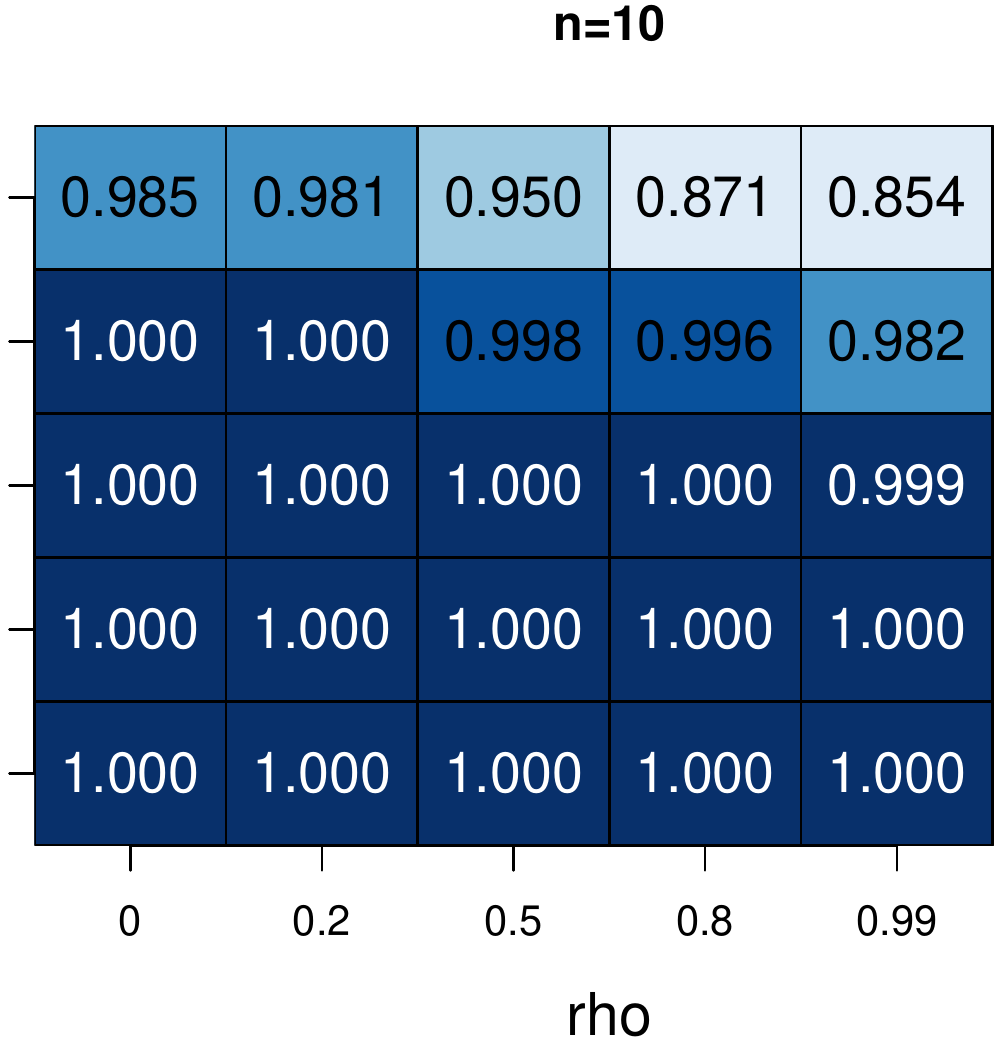}
\caption{The probability that the entropy loss in the equicorrelation model $E_n$ has one critical point or equivalently, the probability that $\Delta(b,a)< 0$, as a function of the true correlation and the sample size. }\label{fig:prob}
\end{figure}

\subsection{Convexity}

In the equicorrelation model $E_n$, the entropy loss given in (\ref{eq:entrequi}) depends on $S$ only through the entries $a,b$ of the symmetrized matrix $\overline S$ (up to a term that does not depend on $\rho$). To describe the convexity cone $C(E_n)$ (c.f. Definition~\ref{df:convcone}) it is then enough to find for which matrices $\overline S$ the entropy loss is convex. We do it by studying the second order condition for convexity. The second derivative of the entropy loss with respect to $\rho$ is equal to $$\tfrac{n(n-1)}{(1-\rho)^3(1+(n-1)\rho)^3}g_n(\rho),$$ where
\begin{eqnarray}
g_n(\rho)&=&2a-1  + 2b(n-2) - ( 6b(n-1) + n-2)\rho + 6a(n-1)\rho^2  \\
\nonumber &+ & ((2a-1)(n-2) - 2b(n-1))(n-1)\rho^3 + (n-1)^2\rho^4
\end{eqnarray}
The entropy loss is convex in the positive definite cone, which by Lemma~\ref{lem:basicab} consists of equicorrelation matrices with $\rho\in (-\tfrac{1}{n-1},1)$, if and only if $g_n(\rho)\geq 0$ in this interval.

Since $g_n(-\tfrac{1}{n-1})=\tfrac{2n^2(a+b(n-1))}{(n-1)^2}$ and $g_n(1)=2(a-b)n^2$ are positive in $\S^n_+$, we believe that a similar analysis to the one for the bivariate case in Section~\ref{sec:bivariate} is possible. Alternatively, a simple lower bound for $C(E_n)$ can be easily obtained without studying the discriminant $\Delta_{g,n}$ of $g_n$. 
\begin{lemma}\label{lem:concave}
In the equicorrelation model $E_n$, if the entries $a$ and $b$ of $\overline{S}$ satisfy
\begin{equation}\label{eq:concavecondition}
-\frac{a}{n-1} + \frac{n}{2(n-1)} \leq b \leq  a - \frac{n}{2(n-1)}
\end{equation}
then the entropy loss function is strictly convex in the positive definite cone. 
\end{lemma}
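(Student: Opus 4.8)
The plan is to verify the second‑order condition for convexity directly, avoiding any analysis of the discriminant $\Delta_{g,n}$ of the quartic $g_n$. By Lemma~\ref{lem:Sbar}, on $E_n$ the map $\rho\mapsto I(S\|\Sigma)$ with $\Sigma=(1-\rho)I_n+\rho\mathbf 1\mathbf 1^T$ agrees, up to an additive constant, with $\rho\mapsto I(\overline S\|\Sigma)$; so it suffices to show that $\tfrac{d^2}{d\rho^2}I(\overline S\|\Sigma)>0$ for every $\rho\in(-\tfrac1{n-1},1)$. As in the proof of Proposition~\ref{prop:convnonempty}, for any $U\in\S^n$ one has $\nabla_U\nabla_U I(\overline S\|\Sigma)=\tr\!\big(KUKUK(2\overline S-\Sigma)\big)$; taking $U=\mathbf 1\mathbf 1^T-I_n=\tfrac{d\Sigma}{d\rho}$ and using that $\Sigma$ is affine in $\rho$ (so the chain rule produces no first‑order term) identifies this with $\tfrac{d^2}{d\rho^2}I(\overline S\|\Sigma)$.

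Since $K=\Sigma^{-1}$ is positive definite, $KUKUK=K^{1/2}\big(K^{1/2}UK^{1/2}\big)^2K^{1/2}$ is positive semidefinite, and it is nonzero because $U\neq 0$, exactly as observed in the proof of Proposition~\ref{prop:convexSS}. Hence $\tr(KUKUK\,M)>0$ for every positive definite $M$, and it is enough to show that the hypothesis (\ref{eq:concavecondition}) forces $2\overline S-\Sigma\succ0$ for all $\Sigma\in E_n$. Writing $\overline S=aI_n+b(\mathbf 1\mathbf 1^T-I_n)$ and $\Sigma=I_n+\rho(\mathbf 1\mathbf 1^T-I_n)$ we get $2\overline S-\Sigma=(2a-1)I_n+(2b-\rho)(\mathbf 1\mathbf 1^T-I_n)$, a matrix of the form $(x-y)I_n+y\mathbf 1\mathbf 1^T$ with $x=2a-1$ and $y=2b-\rho$. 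By Lemma~\ref{lem:basicab} its eigenvalues are $2(a-b)-1+\rho$ (multiplicity $n-1$) and $2a+2(n-1)b-1-(n-1)\rho$ (multiplicity $1$).

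The first eigenvalue is increasing in $\rho$ and the second decreasing, so on $(-\tfrac1{n-1},1)$ their infima are approached as $\rho\to-\tfrac1{n-1}$ and $\rho\to1$, with values $2(a-b)-\tfrac{n}{n-1}$ and $2a+2(n-1)b-n$. These are nonnegative exactly when $a-b\ge\tfrac{n}{2(n-1)}$ and $a+(n-1)b\ge\tfrac n2$, which are precisely the two inequalities in (\ref{eq:concavecondition}) after clearing denominators. For $\rho$ strictly inside the interval both eigenvalues are then strictly positive, so $2\overline S-\Sigma\succ0$, whence $\tfrac{d^2}{d\rho^2}I(\overline S\|\Sigma)>0$ and the entropy loss is strictly convex on $(-\tfrac1{n-1},1)$.

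There is no genuine obstacle here once the matrix form of the Hessian is in hand: the only work is identifying the eigenvalues of $2\overline S-\Sigma$ via Lemma~\ref{lem:basicab} and the two monotonicity observations. One could also stay at the level of polynomials by verifying the identity
\[
n\,g_n(\rho)\;=\;(1+(n-1)\rho)^3\big(2(a-b)-1+\rho\big)+(n-1)(1-\rho)^3\big(2a+2(n-1)b-1-(n-1)\rho\big),
\]
from which positivity of $g_n$ on $(-\tfrac1{n-1},1)$ is immediate under (\ref{eq:concavecondition}), at the cost of a short but tedious expansion. Finally, it is worth stressing that this argument certifies only the polyhedral sub‑cone cut out by (\ref{eq:concavecondition}) inside the full convexity cone $C(E_n)$ of Definition~\ref{df:convcone}; pinning down $C(E_n)$ exactly would require a discriminant analysis of $g_n$ in the spirit of Section~\ref{sec:bivariate}, which the lemma deliberately sidesteps.
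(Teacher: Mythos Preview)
Your proof is correct and follows essentially the same route as the paper. The paper's own argument also reduces to showing $2\overline S-\Sigma\succ 0$ for every $\rho\in(-\tfrac{1}{n-1},1)$; it simply imports this criterion from \cite[Proposition~3.1]{ZUR} rather than re-deriving it via the Hessian identity $\nabla_U\nabla_U I(\overline S\|\Sigma)=\tr(KUKUK(2\overline S-\Sigma))$, and then checks positive definiteness through the inequalities of Lemma~\ref{lem:basicab} rather than the equivalent eigenvalue/monotonicity phrasing you use. Your added polynomial identity for $n\,g_n(\rho)$ is a pleasant bonus (it drops out of the simultaneous diagonalization of $K$, $U$, and $2\overline S-\Sigma$ that you implicitly exploit), but the core argument is the same.
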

\begin{proof}
We use the result from \cite[Proposition~3.1]{ZUR} which states that whenever $2\overline S - \Sigma \succ 0$, the conclusion follows. In our case, the diagonal entries of $2\overline S - \Sigma$ are $2a-1$ and the off-diagonal entries equal $2b-\rho$. Hence, by Lemma~\ref{lem:basicab}, $2\overline S - \Sigma \succ 0$ if and only if
$2a>1$ and $- \frac{2a-1}{n-1} < 2b - \rho < 2a-1$, 
or equivalently,
\begin{equation*}
2a>1\qquad\mbox{and}\qquad 1 + 2(b-a) < \rho < 2b + \frac{2a-1}{n-1}.
\end{equation*}
Since $-\frac{1}{n-1} < \rho < 1$ always holds for all $\rho$ in the correlation model, to conclude that the function is globally convex, we need that $2b + \frac{2a-1}{n-1}\geq 1$ and $1 + 2(b-a)\leq -\tfrac{1}{n-1}$. We easily show that this is equivalent to  $-\frac{a}{n-1} + \frac{n}{2(n-1)} \leq b \leq  a - \frac{n}{2(n-1)}$. 
\end{proof}

\section{Further examples and open questions}\label{sec:further}

Our study of the likelihood geometry of linear correlation models leads to several new insights both for linear correlation models and general affine covariance models. We believe that some of them can be further explored from an statistical as well as an algebraic perspective. In what follows we offer some guidance on potential open questions that we found particularly interesting.

\paragraph{The convexity cone}
One of the new objects  to study is the convexity cone, which we showed to be always non-empty for every linear correlation model. It would be useful to describe the convexity cone for the unconstrained correlation model. Another question is whether we can describe the convexity cone of one-dimensional linear correlation models directly in terms of their generator. It follows from the proof of Proposition~\ref{prop:convnonempty} that the convexity cone of an affine covariance model $M$ is always non-empty when we can uniformly bound the maximal eigenvalue of the matrices $\Sigma\in M$. It would be interesting to see if this eigenvalue condition is necessary for non-emptiness of the convexity cone.

\paragraph{The symmetrized Stein's loss and other losses}

In this paper we proposed the study of the SSL degree, the number of complex solutions to the critical equations of the symmetrized Stein's loss. In all our calculations we observed that the dual ML degree is smaller than the ML degree, which is then smaller than the SSL degree. If these inequalities hold for general models, this would indicate that, despite its algebraicity, the symmetrized Stein's loss may be relatively harder to handle from an algebraic point of view. Currently we do not know how to prove this.

\paragraph{Data leading to the same critical point} One interesting aspect of the likelihood geometry, which we did not study in detail here, concerns the geometry of the data matrices $S$ that lead to the same critical point $\Sigma^*$. Directly from the critical equations in Theorem~\ref{th:critical} we get the following result.
\begin{prop}\label{prop:linearextr}
Let $\Sigma^*$ be a fixed point in an affine covariance model $M=A+\cL$. The set of all $S\in \S^n_+$ for which $I(S\| \Sigma)$ has $\Sigma^*$ as one of its critical points is a linear subspace of $\S^n_+$. 
\end{prop}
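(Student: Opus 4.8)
The plan is to read the proposition straight off the critical equations in Theorem~\ref{th:critical}. Fix $\Sigma^*\in M=A+\cL$ and put $K^*=(\Sigma^*)^{-1}$. Since $\Sigma^*$ already lies in the model, the first condition $\Sigma^*-A\in\cL$ of \eqref{eq:criticalML} is automatic, so $\Sigma^*$ is a critical point of the entropy loss $I(S\|\Sigma)$ over $M$ if and only if the second condition holds, i.e. $K^*-K^*SK^*\in\cL^\perp$. Thus the set in question is exactly $\{S\in\S^n_+:\ K^*-K^*SK^*\in\cL^\perp\}$, and the whole problem reduces to understanding how this condition depends on $S$.

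Next I would unfold the membership in $\cL^\perp$: it says $\langle K^*-K^*SK^*,U\rangle=0$ for every $U\in\cL$, equivalently $\langle S,K^*UK^*\rangle=\langle K^*,U\rangle$ for all $U\in\cL$. Taking a basis $U_1,\dots,U_k$ of $\cL$, this is the system of $k$ scalar equations $\langle S,K^*U_iK^*\rangle=\langle K^*,U_i\rangle$, each an inhomogeneous linear equation in the entries of $S$; hence the solution set is an affine subspace of $\S^n$, and intersecting it with $\S^n_+$ gives the set of the statement. One can even make it explicit: since $S\mapsto K^*SK^*$ is a linear automorphism of $\S^n$ with inverse $T\mapsto\Sigma^*T\Sigma^*$, the condition $K^*SK^*\in K^*+\cL^\perp$ rewrites as $S\in\Sigma^*+\Sigma^*\cL^\perp\Sigma^*$, and in particular the set is nonempty because $S=\Sigma^*$ gives $K^*SK^*=K^*$.

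I do not expect a genuine obstacle here; the result is a direct corollary of Theorem~\ref{th:critical}. The one point worth a comment is the word ``linear'': the defining equations are linear but in general inhomogeneous in $S$ — they are homogeneous precisely when $K^*\in\cL^\perp$ — so strictly the set is an affine slice of $\S^n_+$ passing through $\Sigma^*$, and this is what ``linear subspace of $\S^n_+$'' should be understood to mean. It is instructive to contrast the Stein's loss, where the analogous condition from \eqref{eq:criticalS} is $K^*-W\in\cL^\perp$ with $W=S^{-1}$: that is affine-linear in $W$ but not in $S$, so the corresponding set of $S$ is not an affine subspace in general. The linearity in $S$ is therefore a special feature of the entropy loss, stemming from the fact that its gradient $K-KSK$ is affine in $S$ for fixed $\Sigma$.
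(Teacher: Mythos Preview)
Your argument is correct and matches the paper's own proof exactly: both fix a basis $U_1,\dots,U_k$ of $\cL$, invoke the critical condition $\langle K^*-K^*SK^*,U_i\rangle=0$ from Theorem~\ref{th:critical}, and observe that this is linear in $S$. Your additional remarks---the explicit description $S\in\Sigma^*+\Sigma^*\cL^\perp\Sigma^*$ and the clarification that ``linear'' here means affine---go slightly beyond the paper and are welcome.
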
 
\begin{proof}
If $U_1,\ldots,U_k$ are the generators of $\cL$ then $\Sigma^*$ is a critical point of $I(S\| \Sigma)$ for data $S$ if and only if $\<K^*-K^*SK^*,U_i\>=0$ for $i=1,\ldots,k$. This is a linear condition in $S$.
\end{proof}
The closely related study of all $S$ that lead to the same global minimum of the entropy loss is more subtle. To illustrate, consider the bivariate correlation model in Section~\ref{sec:bivariate}. Fix $\Sigma^*$ in the model with an underlying correlation $\rho$. Then the linear space from Proposition~\ref{prop:linearextr} is given in $(b,a)$ by the equation $f(\rho)=0$ (linear in $a,b$), that is,
$\rho^3-b\rho^2-\rho(1-2a)-b\;=\;0.$
Consider any point $(b,a)$ on this line that does not lie in the bottom red region in Figure~\ref{fig:realsolutions}. By Theorem~\ref{th:bivariatecritical} every such point leads to a unique critical point of the entropy loss, and thus the fixed $\rho$ is the optimal point for the data $(b,a)$. However, if $(b,a)$ lies in the bottom red region then $\rho$ may not be the optimum, so careful analysis is needed to understand the full picture.

We note that Proposition~\ref{prop:linearextr} extends to the Stein's loss: the set of all $W=S^{-1}\in \cS^n_+$ for which $I(\Sigma\| S)$ has $\Sigma^*$ as one of its critical points forms a linear subspace of $\cS^n_+$.

\paragraph{Other interesting correlation models}
There are certainly other tractable and interesting linear correlation models that one can consider. A one-dimensional example is given by the tridiagonal equicorrelation model, where $\Sigma$ is a correlation matrix satisfying $\Sigma_{ij}=\rho$ if $|i-j|=1$, and $\Sigma_{ij}=0$ for all other pairs $i\neq j$. In this case the linear subspace $\cL$ defining this model is generated by a single element $U$ such that $U_{ij}=1$ if $|i-j|=1$ and $U_{ij}=0$ otherwise. 

If $n=3$, the results are as in the equicorrelation case: the three algebraic degrees are 2,3,4. It is a natural question whether there is a deeper reason for this. If $n=4$, the dual ML degree is 4, the ML degree is 7, and the SSL-degree is 8. The dual ML degree sequence is actually $2\lfloor \tfrac{n}{2}\rfloor$. To prove this fact we use the following auxiliary result. 
\begin{lemma}\label{lem:symmequi}
The determinant $\delta_n$ of the symmetric tridiagonal matrix of dimension $n\geq 2$, with 1's on the diagonal and $\rho$ in the upper/lower diagonal, satisfies
$$
\delta_n\;=\;\sum_{i=0}^{\lfloor n/2\rfloor} (-1)^i {\binom{n-i}{i}}\rho^{2i},
$$
where in addition we set $\delta_0=\delta_1=1$. Moreover, 
$$
\det(\Sigma) K_{ij}\;=\;(-1)^{i+j}\rho^{j-i} \delta_{i-1}\delta_{n-j} \qquad\mbox{for }i<j.
$$
\end{lemma}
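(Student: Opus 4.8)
The plan is to establish the two formulas separately, and in both cases the engine is cofactor expansion along a boundary row.

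\textbf{Step 1: the recurrence for $\delta_n$.} Expanding the $n\times n$ symmetric tridiagonal matrix $T_n$ (1's on the diagonal, $\rho$ on the off-diagonals) along the last row gives the classical three-term recurrence $\delta_n = \delta_{n-1} - \rho^2\,\delta_{n-2}$ for $n\geq 2$, with $\delta_0=\delta_1=1$. It then remains to check that the proposed closed form $\sum_{i=0}^{\lfloor n/2\rfloor} (-1)^i \binom{n-i}{i}\rho^{2i}$ satisfies the same recurrence and initial conditions. The initial conditions are immediate, and the recurrence reduces to the Pascal-type identity $\binom{n-i}{i} = \binom{n-1-i}{i} + \binom{n-1-i}{i-1}$ after matching coefficients of $\rho^{2i}$ on both sides (one has to be slightly careful with the top index when $n$ is even versus odd, but the binomial coefficients that fall outside the summation range vanish, so the bookkeeping is routine). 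This is the standard fact that the $\delta_n$ are (up to rescaling) Chebyshev polynomials, whose coefficients are exactly these Fibonacci-type binomials.

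\textbf{Step 2: the cofactor formula for $K_{ij}$.} Since $K=\Sigma^{-1}$, we have $\det(\Sigma)\,K_{ij} = (-1)^{i+j}\,M_{ji}$, where $M_{ji}$ is the $(j,i)$-minor of $\Sigma$, i.e. the determinant of the matrix obtained by deleting row $j$ and column $i$ of the tridiagonal $\Sigma$ (and by symmetry of $\Sigma$ we may equally delete row $i$ and column $j$). For $i<j$, deleting row $i$ and column $j$ from a tridiagonal matrix produces a block-triangular matrix: the first $i-1$ rows/columns contribute the top-left tridiagonal block $T_{i-1}$ with determinant $\delta_{i-1}$; the last $n-j$ rows/columns contribute the bottom-right tridiagonal block $T_{n-j}$ with determinant $\delta_{n-j}$; and the middle block, indexed by rows $i+1,\dots,j$ and columns $i,\dots,j-1$, is lower triangular (in fact bidiagonal) with the entry $\rho$ repeated $j-i$ times on its diagonal, contributing $\rho^{j-i}$. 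Multiplying the three blocks and reinstating the sign $(-1)^{i+j}$ gives $\det(\Sigma)\,K_{ij} = (-1)^{i+j}\rho^{j-i}\,\delta_{i-1}\,\delta_{n-j}$, as claimed. One checks the edge cases $i=1$ (so $\delta_{i-1}=\delta_0=1$) and $j=n$ (so $\delta_{n-j}=\delta_0=1$) are consistent with the convention $\delta_0=1$.

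\textbf{Main obstacle.} The only genuinely delicate point is the structure of the middle block in Step 2: one must verify carefully that after striking row $i$ and column $j$ the surviving entries in rows $i+1,\dots,j$ and columns $i,\dots,j-1$ form a square bidiagonal matrix whose determinant is exactly $\rho^{j-i}$, with no stray contributions coupling it to the two diagonal blocks. This is a matter of tracking which tridiagonal entries survive the deletion; drawing the deleted matrix for a small example (say $n=5$, $i=2$, $j=4$) makes the block-triangular pattern transparent, and the general argument is then just indexing. Everything else — the recurrence in Step 1 and the binomial identity — is entirely routine.
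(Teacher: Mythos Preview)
Your argument is correct and self-contained, whereas the paper does not actually prove this lemma: it simply cites formulas (5) and (9) of Hu--O'Connell, \emph{Analytical inversion of symmetric tridiagonal matrices}, after the rescaling $\Sigma=\rho M_n$. So your route is genuinely different in that you supply the direct cofactor computation that the paper outsources. The benefit of your approach is that the reader sees exactly why the block-triangular structure appears and why the conventions $\delta_0=\delta_1=1$ are the right ones; the paper's citation is of course shorter but opaque.

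One small slip worth fixing: in Step~2 the middle block (rows $i+1,\dots,j$, columns $i,\dots,j-1$) is \emph{upper} triangular, not lower. Its diagonal entries are $\Sigma_{r,r-1}=\rho$ and its superdiagonal entries are $\Sigma_{r,r}=1$, while the subdiagonal entries $\Sigma_{r,r-2}$ vanish. This does not affect the determinant $\rho^{j-i}$, but you should correct the wording. Your check that the three sub-diagonal blocks vanish (because any surviving entry there has row--column gap at least $2$) is the right justification for block-upper-triangularity and is worth stating explicitly rather than deferring to a small example.
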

\begin{proof}
This can be inferred from the formulas (5) and (9) in \cite{hu1996analytical} noting that their matrix $M_n$ links to ours via $\Sigma=\rho M_n$ with their $D$ equal to $\tfrac{1}{\rho}$.
\end{proof}
\begin{thm}\label{th:tridual}
The dual ML degree in the tridiagonal equicorrelation model is $2\lfloor\tfrac{n}{2}\rfloor$ for any $n\geq 1$. The critical points are the solutions to the polynomial:
\begin{equation}\label{eq:dMLEtri}
s \delta_n+\rho\sum_{i=1}^{n-1} \delta_{i-1}\delta_{n-i-1}\;=\;0,
\end{equation}
where $s=\sum_{i=1}^{n-1} W_{i,i+1}$ and $\delta_n$ is as in Lemma~\ref{lem:symmequi}. 
\end{thm}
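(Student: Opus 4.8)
The plan is to collapse the two conditions of~\eqref{eq:criticalS} into a single univariate polynomial equation in $\rho$, and then to compute its degree with the help of Lemma~\ref{lem:symmequi}. Let $U$ be the generator of $\cL$, so $\Sigma=I_n+\rho U$ with $U_{ij}=1$ when $|i-j|=1$ and $U_{ij}=0$ otherwise. Since $\cL=\mathrm{span}\{U\}$, Theorem~\ref{th:critical} says that a point $\Sigma$ of the model is a critical point of the Stein's loss exactly when $\<K-W,U\>=0$; because $U$ is symmetric and supported on the two off-diagonals, $\<A,U\>=2\sum_{i=1}^{n-1}A_{i,i+1}$ for every symmetric $A$, so this condition reads $\sum_{i=1}^{n-1}K_{i,i+1}=\sum_{i=1}^{n-1}W_{i,i+1}=s$. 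Recording this is the first step.

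Next I would substitute the closed forms from Lemma~\ref{lem:symmequi}: taking $j=i+1$ there gives $\det(\Sigma)\,K_{i,i+1}=(-1)^{2i+1}\rho\,\delta_{i-1}\delta_{n-i-1}=-\rho\,\delta_{i-1}\delta_{n-i-1}$, while $\det(\Sigma)=\delta_n$. Summing over $i$ and multiplying the critical equation by $\delta_n$ turns it verbatim into~\eqref{eq:dMLEtri}. At that point the critical points of the Stein's loss (the values $\rho$ with $\delta_n(\rho)\ne 0$) have been matched with a subset of the roots of the polynomial in~\eqref{eq:dMLEtri}; the case $n=1$ is vacuous, so I would assume $n\ge 2$ and only need to count those roots for generic data.

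For the count I would first read off from the binomial formula in Lemma~\ref{lem:symmequi} that $\delta_n$ involves only even powers of $\rho$, with leading coefficient $(-1)^{\lfloor n/2\rfloor}\binom{\lceil n/2\rceil}{\lfloor n/2\rfloor}\ne 0$, so $\deg_\rho\delta_n=2\lfloor n/2\rfloor$. Since $\deg_\rho(\delta_{i-1}\delta_{n-i-1})\le 2\lfloor(i-1)/2\rfloor+2\lfloor(n-i-1)/2\rfloor\le 2\lfloor(n-2)/2\rfloor$, the second summand of~\eqref{eq:dMLEtri} has $\rho$-degree at most $2\lfloor(n-2)/2\rfloor+1=2\lfloor n/2\rfloor-1$; hence, as soon as $s\ne 0$, the polynomial~\eqref{eq:dMLEtri} has degree exactly $2\lfloor n/2\rfloor$ in $\rho$. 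Two genericity checks then finish the proof. First, differentiating $\delta_n=\det(\Sigma)$ in $\rho$ via Jacobi's formula and applying Lemma~\ref{lem:symmequi} gives $\delta_n'=-2\rho\sum_{i=1}^{n-1}\delta_{i-1}\delta_{n-i-1}$, so~\eqref{eq:dMLEtri} is equivalent to $2s\delta_n=\delta_n'$; since $\delta_n$ has simple roots (namely $-1/(2\cos(k\pi/(n+1)))$), any common root of~\eqref{eq:dMLEtri} and $\delta_n$ would force $\delta_n'$ to vanish there, which is impossible, so all $2\lfloor n/2\rfloor$ roots are genuine critical points. Second, for generic $S$ these roots are distinct: the $\rho$-discriminant of~\eqref{eq:dMLEtri} is a nonzero polynomial in $s$ (as $s\to\infty$ its roots approach the distinct roots of $\delta_n$) and $S\mapsto s$ is a nonconstant rational function on $\S^n_+$. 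Therefore the dual ML degree equals $2\lfloor n/2\rfloor$.

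The main obstacle here is organizational rather than conceptual: one must carry the floor-function arithmetic so that the degree comes out to $2\lfloor n/2\rfloor$ uniformly for even and odd $n$, and one must justify the two genericity statements above — equivalently, that $\gcd(\delta_n,\delta_n')=1$ and that the $\rho$-discriminant of~\eqref{eq:dMLEtri} is not identically zero in $s$. The reduction to~\eqref{eq:dMLEtri} itself is an immediate consequence of Theorem~\ref{th:critical} and the second identity in Lemma~\ref{lem:symmequi}.
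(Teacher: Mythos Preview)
Your argument is correct and follows essentially the same route as the paper: derive the single scalar equation $\sum_i K_{i,i+1}=s$ from Theorem~\ref{th:critical}, clear the denominator $\delta_n$ using Lemma~\ref{lem:symmequi} to obtain~\eqref{eq:dMLEtri}, and then compare degrees to see that the leading term is $s\,\delta_n$ of degree $2\lfloor n/2\rfloor$. Your floor-function bound $\lfloor(i-1)/2\rfloor+\lfloor(n-i-1)/2\rfloor\le\lfloor(n-2)/2\rfloor$ is a clean replacement for the paper's four-case parity check.

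Where you go slightly beyond the paper is in the two genericity verifications. The paper simply asserts that the critical equation is ``equivalent'' to~\eqref{eq:dMLEtri} and stops once the degree is computed; you explicitly rule out spurious roots at $\delta_n=0$ via the identity $\delta_n'=-2\rho\sum_i\delta_{i-1}\delta_{n-i-1}$ (so~\eqref{eq:dMLEtri} reads $2s\,\delta_n=\delta_n'$ and a common zero with $\delta_n$ would force a double root of $\delta_n$), and you argue that the $\rho$-discriminant is not identically zero in $s$. These are genuine improvements in rigor, and your derivative identity is a nice observation in its own right. The paper's proof is terser but relies on the reader filling in exactly these points.
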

\begin{proof}
The critical equation is that $s$ is equal to $\sum_{i=1}^{n-1} K_{i,i+1}$. Using Lemma~\ref{lem:symmequi}, this is equivalent to (\ref{eq:dMLEtri}). It remains to show that the polynomial in (\ref{eq:dMLEtri}) has degree $2\lfloor\tfrac{n}{2}\rfloor$ as long as $s\neq 0$. The degree of $\rho \delta_{i-1}\delta_{n-i-1}$ is $2\left(\lfloor\tfrac{i-1}{2}\rfloor+\lfloor\tfrac{n-i-1}{2}\rfloor\right)+1$, which is \emph{strictly} less than the degree of $\delta_n$. To show this, we consider four cases depending on the parity of $n$ and $i$. If $n$ is odd then the degree of $\rho \delta_{i-1}\delta_{n-i-1}$ is $n-2$, which is less than $\deg(\delta_n)=n-1$. If $n$ is even then $\deg(\rho \delta_{i-1}\delta_{n-i-1})\in \{n-1,n-3\}$, which again is less than $\deg(\delta_n)=n$. This means that the polynomial (\ref{eq:dMLEtri}) in $\rho$ has degree $2\lfloor n/2\rfloor$, with leading coefficient
\begin{equation*}
   s (-1)^{\lfloor n/2\rfloor}{\binom{n-\lfloor n/2\rfloor}{\lfloor n/2\rfloor}}. \qedhere
\end{equation*} 
\end{proof}

\small
\section*{Acknowledgements}

CA was partially supported by the Deutsche Forschungsgemeinschaft (DFG) in the context of the Emmy
Noether junior research group KR 4512/1-1. PZ was supported from the Spanish Government grants (RYC-2017-22544,PGC2018-101643-B-I00), and Ayudas Fundaci\'on BBVA a Equipos de Investigaci\'on Cientifica 2017. We thank anonymous referees for their comments and suggestions which helped us improve the exposition.

\vspace{-1cm}

\end{document}